\documentclass[a4paper,11pt]{amsart}
\usepackage[utf8]{inputenc}
\usepackage{amssymb,amsmath,amsthm}
\usepackage{amsfonts} 
\usepackage{a4wide}
\usepackage{bbm}
\usepackage{bm}
\usepackage[font=small,format=hang,labelfont={sf,bf}]{caption}
\usepackage{color}
\usepackage{empheq}
\usepackage{epsfig}
\usepackage{faktor}
\usepackage{graphicx}
\usepackage{hyperref}
\usepackage{hyphenat}
\usepackage{latexsym}
\usepackage{mathrsfs}
\usepackage{microtype}
\usepackage{orcidlink}
\usepackage{url}
\usepackage{varioref}
\theoremstyle{plain}
\begingroup
\newtheorem{theorem}{Theorem}[section]
\newtheorem{definition}[theorem]{Definition}
\newtheorem{lemma}[theorem]{Lemma}
\newtheorem{proposition}[theorem]{Proposition}

\endgroup
\theoremstyle{definition}
\newtheorem{remark}[theorem]{Remark}

\newcommand{\calA}{\mathcal{A}}

\newcommand{\calE}{\mathrm{E}}

\newcommand{\calH}{\mathcal{H}}

\newcommand{\calP}{\mathcal{P}}

\newcommand{\mathN}{\mathbb{N}}

\newcommand{\mathR}{\mathbb{R}}
\newcommand{\mathS}{\mathbb{S}}
\newcommand{\mathZ}{\mathbb{Z}}
\newcommand{\R}{\mathbb{R}}
\newcommand{\Om}{\Omega}

\newcommand{\diverge}{\mathrm{div}\,}

\newcommand{\diam}{\mathrm{diam}\,}

\newcommand{\supp}{\mathrm{supp}\,}

\newcommand{\Per}{\mathrm{P}}

\definecolor{ddorange}{rgb}{1,0.5,0}
\definecolor{ddcyan}{rgb}{0,0.2,1.0}

\def\avint{\mathop{\,\rlap{---}\!\!\int}\nolimits}

\begin{document}
\title{Optimal sets for a geometric oscillation energy}

\author{Matteo Novaga} 
\address{Universit\`a di Pisa \\ 
Largo Bruno Pontecorvo 5, 56127 Pisa, Italy}
\email[M.~Novaga]{matteo.novaga@unipi.it \orcidlink{0000-0002-0812-3780}}

\author{Fumihiko Onoue} 
\address{Technische Universit\"at M\"unchen, Bolzmannstrasse 3, 85748 Garching, Germany} 
\email[F.~Onoue]{fumihiko.onoue@tum.de \orcidlink{0000-0002-4031-7681}}

\author{Emanuele Paolini} 
\address{Universit\`a di Pisa \\ 
Largo Bruno Pontecorvo 5, 56127 Pisa, Italy}
\email[E.~Paolini]{emanuele.paolini@unipi.it \orcidlink{0000-0001-7021-7537}}

\date{\today}	

\begin{abstract}
We investigate the nonlocal energy corresponding to the $p$-oscillation of the unit normal vector for hypersurfaces, or the unit tangent vector for curves. The energy satisfies geometric inequalities with optimal constants $c(n,p)$ and $C(n,p)$ which are determined by a variational problem over the probability measures on the sphere. The extremal measures for such problem depend critically on the value of $p$. 
We prove existence of optimal sets for this energy under perimeter and volume constraint, and characterize their shape.
\end{abstract}

\maketitle
%%%%%%%%%%%%%%%%%%%%%%%
%\setcounter{tocdepth}{1} 
\tableofcontents
%%%%%%%%%%%%%%%%%%%%%%%%%%%%%%%%%%%%%%
%%%%%%%%%%%%%%%%%%%%%%%%%%%%%%%%%%%%%%
%%%%%%%%%%%%%%%%%%%%%%%%%%%%%%%%%%%%%%

\section{Introduction}
Nonlocal geometric functionals have attracted considerable attention in recent years due to their rich mathematical structure and their appearance in physical models, such as in the theory of membranes and vesicles. A typical example of nonlocal energies is the fractional perimeter, introduced in \cite{CRS10} as a model of phase transitions with long-range interactions. Since then, minimization problems involving the fractional perimeter have been studied by many authors (see for instance \cite{DipVal23, Serra24} and references therein). Another example is the nonlocal Willmore energy or its variants, which involve double integrals of a kernel depending on both position and normal vector (see for instance \cite{BlaRei15, MS19, CeNo20, BRS25} for further details).

In this paper, we consider a nonlocal geometric energy that depends on the normal vectors. More precisely, given $p>0$ and a set $\Om\subset \mathR^n$ of finite volume and finite perimeter, we consider the energy 
\begin{equation}\label{defEp}
    \mathrm{E}_p(\Omega) \coloneqq  \iint_{\partial^*\Omega\times\partial^*\Omega}
    |\nu_\Omega(x)-\nu_\Omega(y)|^p\,d\mathcal{H}^{n-1}(x)\,d\mathcal{H}^{n-1}(y)
\end{equation}
where $\partial^*\Omega$ denotes the reduced boundary of $\Omega$ and $\nu_\Omega(x)$ the measure-theoretic outer unit normal at $x \in \partial^*\Omega$ (see \cite{AFP00}). 
% Note that if $\partial \Omega$ is sufficiently smooth, then we often write $\calE_p(\Sigma) = \mathrm{E}_p(\Omega)$. 
Notice that, if we consider a rescaled set $\lambda\Om$ with $\lambda>0$, we have
\[
\mathrm{E}_p(\lambda\Omega)=\lambda^{2n-2}\mathrm{E}_p(\Omega).
\]
A natural question is understanding the range of possible values of $\calE_p(\Om)$ for sets $\Om$ 
with a prescribed perimeter or volume. In particular, we shall prove that there exist optimal constants $c(n,p)$ and $C(n,p)$, depending only on $n$ and $p$, such that
\begin{equation}\label{mainineq}
    c(n,p)\,\Per(\Om)^2 \le \mathrm{E}_p(\Om) \le C(n,p)\,\Per(\Om)^2
\end{equation}
for every set $\Om \subset \mathR^n$ of finite perimeter. 
For some values of $p$, we can also characterize the 
optimal sets realizing the equalities in \eqref{mainineq}, that is, we consider the variational problems
\begin{eqnarray}
\label{minEp}
\min\left\{\mathrm{E}_p(\Om):\ \Om\subset\R^n,\ |\Om|<+\infty,
\ \Per(\Om)=P_0\in (0,+\infty)\right\},
\\
\label{maxEp}
\max\left\{\mathrm{E}_p(\Om):\ \Om\subset\R^n,\ |\Om|<+\infty,
\ \Per(\Om)=P_0\in (0,+\infty)\right\}.
\end{eqnarray}
These problems reduce to the study of a quadratic functional on probability measures on the unit sphere $\mathS^{n-1}$. Indeed, letting $\mu_\Om\in\mathcal P(\mathS^{n-1})$ be the push-forward of the normalized area measure $\Per(\Om)^{-1}\,\mathcal H^{n-1}\lfloor_{\partial^*\Om}$ via the Gauss map $\nu_\Om$ (see Section \ref{sec:preliminaries} for the precise definition), we have
\[
    \mathrm{E}_p(\Om)=\Per(\Om)^2 J_p(\mu_\Om), \qquad \textrm{where}\quad
    J_p(\mu):=\iint_{\mathS^{n-1}\times\mathS^{n-1}}|v-w|^p\,d\mu(v)d\mu(w),
\]
with the constraint $\int_{\mathS^{n-1}} v\,d\mu_\Omega(v)=0$. Thus, the constants in \eqref{mainineq} are given by
\begin{equation}\label{defconstants}
    c(n,p)=\min_{\mu\in\mathcal{P}_0(\mathS^{n-1})} J_p(\mu), \qquad 
    C(n,p)=\max_{\mu\in\mathcal{P}_0(\mathS^{n-1})} J_p(\mu),
\end{equation}
where $\mathcal{P}_0(\mathS^{n-1})$ denotes the set of probability measures on $\mathS^{n-1}$ with barycenter in the origin.

\smallskip
The paper is organized as follows: in Section \ref{sec:preliminaries} we set up the notation and recall some basic facts about sets of finite perimeter and Radon measures. In Section \ref{sec:constants} we give some estimates and, in some cases, we characterize
the constants $c(n,p)$ and $C(n,p)$ for different ranges of $p$. 
%We also analyze the limit $p\to\infty$. 
In Section \ref{sec:perimeter} we characterize the optimal sets for problems \eqref{minEp} and \eqref{maxEp}, and in Section \ref{sec:volume} we consider the corresponding minimum problem with a volume constraint. Finally, in Section \ref{seccurves} we discuss analogous problems for closed curves in $\R^n$ of fixed length. 

\smallskip
\textsc{Acknowledgments.} The authors wish to thank Marco Pozzetta 
for useful discussions on this problem. 
M.N. was partially supported by Next Generation EU, 
PRIN 2022E9CF89 and PRIN PNRR P2022WJW9H; 
E.P. was partially supported by Next Generation EU, PRIN 2022PJ9EFL.
M.N and E.P. are members of INDAM-GNAMPA.

\section{Notation}\label{sec:preliminaries}
Let $n \geq 2$. Given a measurable set $\Omega\subset\mathR^n$, its perimeter $\Per(\Omega)$ is defined as
\[
    \Per(\Omega)=\sup\left\{\int_{\mathR^n} \chi_{\Omega}\, \diverge g\,dx :\ g\in C^1_c(\mathR^n;\mathR^n),\, \|g\|_\infty\le 1\right\}
\]
where $\chi_{\Omega}$ is the characteristic function of $\Omega$. If $\Per(\Omega)<+\infty$, we say that $\Omega$ is a set of finite perimeter. In this case, there exists a $\calH^{n-1}$-rectifiable set $\partial^*\Omega$, called reduced boundary, and a measurable function $\nu_\Omega:\partial^*\Omega\to\mathS^{n-1}$, called measure-theoretic outer unit normal.
The energy $\mathrm{E}_p$ in \eqref{defEp} is well-defined for any set $\Omega$ of finite perimeter, since the normal map $\nu_\Omega$ is defined $\calH^{n-1}$-almost everywhere on $\partial^*\Omega$.
Gauss-Green Theorem ensures that, if $\Omega$ has finite measure, then $\int_{\partial^*\Omega} \nu_\Omega(x)\, d\calH^{n-1}(x)=0$.

We denote by $\mathcal{P}(\mathS^{n-1})$ the set of probability measures on the unit sphere $\mathS^{n-1}\subset\mathR^n$, and by $\mathcal{P}_0(\mathS^{n-1})$ the subset of 
$\mathcal{P}(\mathS^{n-1})$ with barycenter in the origin.  
We recall that, for $\mu\in\mathcal{P}(\mathS^{n-1})$, 
the barycenter of $\mu$ is the vector
\[
    \overline{\mu}=\int_{\mathS^{n-1}} v\,d\mu(v)\in\mathR^n.
\]
Given a set $\Om$ of finite volume and finite perimeter, we define the push-forward of the area measure $\calH^{n-1}$ of $\partial^* \Omega$ with respect to its measure-theoretic outer unit normal by
\[
\mu_\Om:=\frac{(\nu_\Om)_\#\calH^{n-1} \lfloor_{\partial^*\Om}}
{\Per(\Om)}\in\mathcal{P}_0(\mathS^{n-1}).
\]
Notice that a measure $\mu_\Om$ corresponding to a set $\Om$ of finite perimeter cannot be supported on a closed hemisphere. 

\section{A variational problem on probability measures}

For $p>0$ and $\mu\in\mathcal{P}(\mathS^{n-1})$, we define
\begin{equation}\label{defJp}
    J_p(\mu):=\iint_{\mathS^{n-1}\times\mathS^{n-1}}|v-w|^p\,d\mu(v)d\mu(w).
\end{equation}
Since the kernel $|v-w|^p$ is continuous on $\mathS^{n-1}\times\mathS^{n-1}$, the functional $J_p$ is continuous with respect to the weak-star convergence of measures.

Two particular measures will play a special role: 
the uniform measure $\sigma_{n-1}$ on $\mathS^{n-1}$, defined as 
$\sigma_{n-1}:=\calH^{n-1}(\mathS^{n-1})^{-1}\calH^{n-1}\lfloor_{\mathS^{n-1}}$; 
the measure $\mu_{\rm sim}$, defined up to rotations, uniformly distributed on the vertices of a regular $n$-simplex inscribed in $\mathS^{n-1}$. More precisely, letting $v_0,\dots,v_n\in\mathS^{n-1}$ be such that $|v_i-v_j|$ is constant for $i\neq j$, 
then $\mu_{\rm sim}=\frac{1}{n+1}\sum_{i=0}^n\delta_{v_i}$. 
Explicit computations yield the values
\begin{align*}
    &J_p(\sigma_{n-1})=\frac{2^{p+n-2}\Gamma\bigl(\frac{n}{2}\bigr)
    \Gamma\bigl(\frac{p+n-1}{2}\bigr)}{\sqrt\pi\,
    \Gamma\bigl(\frac{p}{2}+n-1\bigr)}, \\
    &J_p(\mu_{\rm sim})=2^{\frac{p}{2}}\Bigl(1+\frac1n\Bigr)^{\frac{p}{2}-1}.
\end{align*}

The following result, which follows directly from the definition of push-forward measure, shows that the energy $\calE_p(\Om)$ can be expressed in terms of the measure $\mu_\Om$.

\begin{proposition}\label{prop:measure-reformulation}
    Let $\Omega\subset\mathR^n$ be a set of finite perimeter. Then
    \[
    \calE_p(\Om) = \Per(\Om)^2\,J_p(\mu_\Om).
    \]
\end{proposition}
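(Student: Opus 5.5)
The identity follows from the change-of-variables formula for push-forward measures, applied to the product measure on $\partial^*\Om\times\partial^*\Om$. We may assume $0<\Per(\Om)<+\infty$, so that $\mu_\Om$ is defined; if $\Per(\Om)=0$ both sides vanish trivially.

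Set $\lambda:=\calH^{n-1}\lfloor_{\partial^*\Om}$, a finite Borel measure of total mass $\Per(\Om)$ (De Giorgi's theorem gives $\Per(\Om)=\calH^{n-1}(\partial^*\Om)$). The map $\nu_\Om:\partial^*\Om\to\mathS^{n-1}$ is Borel measurable and defined $\lambda$-a.e., so $(\nu_\Om)_\#\lambda=\Per(\Om)\,\mu_\Om$.

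The first step is to pass to the product. Consider the map $\Phi:=\nu_\Om\times\nu_\Om:\partial^*\Om\times\partial^*\Om\to\mathS^{n-1}\times\mathS^{n-1}$, given by $\Phi(x,y)=(\nu_\Om(x),\nu_\Om(y))$. We check that
\[
\Phi_\#(\lambda\otimes\lambda)=(\nu_\Om)_\#\lambda\otimes(\nu_\Om)_\#\lambda=\Per(\Om)^2\,\mu_\Om\otimes\mu_\Om .
\]
It suffices to verify this on rectangles $A\times B$ with $A,B\subset\mathS^{n-1}$ Borel. There
\[
\Phi^{-1}(A\times B)=\nu_\Om^{-1}(A)\times\nu_\Om^{-1}(B),
\]
so both sides give $\lambda(\nu_\Om^{-1}(A))\,\lambda(\nu_\Om^{-1}(B))$. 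Rectangles form a $\pi$-system generating the Borel $\sigma$-algebra of $\mathS^{n-1}\times\mathS^{n-1}$, and both measures are finite with equal total mass. The $\pi$-$\lambda$ uniqueness theorem then gives equality on all Borel sets.

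The second step applies the change-of-variables formula $\int g\circ\Phi\,d(\lambda\otimes\lambda)=\int g\,d\Phi_\#(\lambda\otimes\lambda)$ to the function $g(v,w)=|v-w|^p$. This function is continuous and bounded by $2^p$ on $\mathS^{n-1}\times\mathS^{n-1}$, so all integrals are finite. We obtain
\[
\calE_p(\Om)=\iint|\nu_\Om(x)-\nu_\Om(y)|^p\,d\lambda(x)\,d\lambda(y)=\Per(\Om)^2\iint|v-w|^p\,d\mu_\Om(v)\,d\mu_\Om(w)=\Per(\Om)^2J_p(\mu_\Om).
\]
The first equality uses Fubini's theorem to identify the iterated integral in \eqref{defEp} with the product-measure integral. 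This is legitimate because the integrand is nonnegative and bounded and the measures are finite.

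The only point requiring care is the identification of the push-forward of the product measure. This is handled by the $\pi$-system argument above; the rest is routine measure theory.
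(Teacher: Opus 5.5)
Your proof is correct and takes the same approach as the paper, which simply says the identity follows directly from the definition of the push-forward measure. Your $\pi$-system argument identifying $(\nu_\Om\times\nu_\Om)_\#(\lambda\otimes\lambda)$ with $\Per(\Om)^2\,\mu_\Om\otimes\mu_\Om$, together with the Fubini step, just spells out that one-line justification; the only nitpick is that for $\Per(\Om)=0$ the measure $\mu_\Om$ is undefined, so that case is a convention rather than a computation.
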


The following result provides a characterization of such measures (see \cite[Theorems 7.2.1 and 8.2.2]{S14}).

\begin{theorem}[Minkowski Theorem]\label{teomink}
Let $\mu\in\mathcal{P}_0(\mathS^{n-1})$ be such that the support of $\mu$ is not contained in a closed hemisphere. Then there exists a bounded convex set $C \subset \mathR^n$ such that $\Per(C)=1$ and $\mu=\mu_C$, i.e., 
\begin{equation*}
    \mu(S) = \calH^{n-1}( \{ x \in \partial^* C \, : \, \nu_C(x) \in S \})
\end{equation*}
for any Borel subset $S \subset \mathS^{n-1}$. Moreover the convex set $C$ is unique up to translations.
\end{theorem}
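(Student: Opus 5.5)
The proof follows Minkowski's classical strategy: solve the problem first for discrete measures, where it becomes a finite-dimensional optimization over polytopes, then pass to the limit. Uniqueness comes from the equality case of Minkowski's first inequality for mixed volumes.

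\emph{Step 1: the discrete case.} Let $\mu=\sum_{i=1}^N a_i\delta_{u_i}$ with $a_i>0$, $\sum_i a_i u_i=0$, and $u_i$ not contained in a closed hemisphere. For $h\in[0,\infty)^N$ set $P(h)=\bigcap_i\{x:\ x\cdot u_i\le h_i\}$. This set is bounded precisely because the $u_i$ do not lie in a closed hemisphere. We maximize $|P(h)|$ subject to $\sum_i a_i h_i=1$.
\begin{itemize}
\item Existence of a maximizer: the admissible set is compact, and $h\mapsto|P(h)|$ is continuous.
\item The maximum is positive, and the maximizer $h^*$ gives a polytope with nonempty interior.
\item Translation invariance: since $\sum_i a_iu_i=0$, we have $\sum_i a_i(h_i+t\cdot u_i)=\sum_i a_ih_i$ for every $t\in\mathR^n$. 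So we may translate and assume $0\in\operatorname{int}P(h^*)$, which makes all $h^*_i>0$ and puts $h^*$ in the interior of the positive cone.
\end{itemize}
The facet-area formula $\partial|P(h)|/\partial h_i=F_i(h):=\calH^{n-1}(\text{facet of }P(h)\text{ with normal }u_i)$ is the key ingredient. It holds in general, with facets of zero area allowed. Using it, the Lagrange condition gives $F_i(h^*)=\lambda a_i$ for all $i$. The homogeneity identity $n|P|=\sum_i h_iF_i$ then shows $\lambda>0$, so every facet is nondegenerate. Rescaling $P(h^*)$ to perimeter $1$ gives a polytope $C$ with $\mu_C=\mu$.

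\emph{Step 2: approximation.} For general $\mu$, partition $\mathS^{n-1}$ into Borel pieces of diameter less than $1/k$. Replace $\mu$ on each piece by a Dirac mass of the same weight placed at the normalized barycenter of $\mu$ restricted to that piece. A small correction restores the exact zero barycenter, for instance by adding small masses at $\pm e_j$ or by choosing the atoms as normalized barycenters and compensating. This produces discrete $\mu_k\in\mathcal P_0(\mathS^{n-1})$ with $\mu_k\rightharpoonup\mu$ and not supported in closed hemispheres. Step 1 then gives polytopes $C_k$ with $\Per(C_k)=1$ and $\mu_{C_k}=\mu_k$, translated so that they contain the origin.

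\emph{Step 3: compactness, which I expect to be the main obstacle.} We need uniform bounds: $\diam C_k\le D$, and $C_k$ does not collapse to lower dimension.

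For the diameter, write $L=\diam C_k$ and take a segment $[0,Lw]\subset C_k$ with $w$ a unit vector. The support function satisfies $h_{C_k}(u)\ge L\,(u\cdot w)_+$. Then
\[
|C_k|=\tfrac1n\int h_{C_k}\,d\mu_k\ \ge\ \tfrac Ln\int (u\cdot w)_+\,d\mu_k(u).
\]
The quantity $\int(u\cdot w)_+\,d\mu_k$ is bounded below uniformly in $w$ and $k$. This uses weak convergence together with the fact that $\min_w\int (u\cdot w)_+\,d\mu>0$, which holds exactly because $\mu$ is not concentrated on a closed hemisphere. Combined with the isoperimetric bound $|C_k|\le c_n$, this gives a uniform bound on $L$.

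Blaschke selection then gives $C_k\to C$ in the Hausdorff sense. If $C$ were contained in a hyperplane with normal $e$, then $\mu_{C_k}$ would concentrate on $\{\pm e\}$. This contradicts $\mu_k\rightharpoonup\mu$, since the support of $\mu$ is not contained in a closed hemisphere, so $C$ has nonempty interior.

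Finally, surface area measures are weakly continuous under Hausdorff convergence of convex bodies. Hence $\mu_C=\lim_k\mu_{C_k}=\mu$ and $\Per(C)=1$.

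\emph{Step 4: uniqueness.} Suppose $\mu_{C}=\mu_{K}=\mu$ with $\Per(C)=\Per(K)=1$. The mixed volume $V_1(K,C)=\tfrac1n\int h_C\,d\mu_K$ equals $\tfrac1n\int h_C\,d\mu_C=|C|$. Minkowski's first inequality $V_1(K,C)^n\ge|K|^{n-1}|C|$ then gives $|C|\ge|K|$, and by symmetry $|C|=|K|$. So equality holds in Minkowski's inequality, and its equality case, derived from Brunn--Minkowski, forces $K$ and $C$ to be homothetic. Equal volume then makes them translates of each other.
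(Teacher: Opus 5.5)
Your proposal is correct. The paper does not prove this statement itself but quotes it from Schneider's book (\cite[Theorems 7.2.1 and 8.2.2]{S14}), and your argument is exactly that classical proof: existence for polytopes by maximizing volume under $\sum_i a_i h_i=1$ using the facet-area derivative formula, discrete approximation with Blaschke selection and the diameter bound from $\min_w\int (u\cdot w)_+\,d\mu>0$, and uniqueness from the equality case of Minkowski's first inequality. One cosmetic fix in Step 3: translate $C_k$ so that an endpoint of a diameter, rather than an arbitrary point of $C_k$, sits at the origin before writing $[0,Lw]\subset C_k$.
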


\begin{remark}\label{rmk:applicationMinkowski}
As an application of Theorem \ref{teomink} we obtain that, given $\Omega \subset \mathR^n$ of finite perimeter, there exists a bounded convex set $C \subset \mathR^n$, unique up to translations, such that
\begin{equation*}
\mu_\Om = \mu_C, \qquad
    \Per(\Omega) = \Per(C), \qquad \mathrm{E}_p(\Omega) = \mathrm{E}_p(C).
\end{equation*}
Indeed, by applying the Minkowski theorem to the measure $\mu_\Om$,
%$\widetilde{\mu}_{\Omega} \coloneqq (\nu_{\Omega})_{\#} \calH^{n-1}\lfloor_{\partial^* \Omega}$ on $\mathS^{n-1}$, 
we find a bounded convex set $\widehat C \subset \mathR^n$ such that $\mu_{\Omega} = \mu_{\widehat C}$ and $\Per(\widehat{C}) =1$. Setting now $C=\Per(\Om)^\frac{1}{n-1}\, \widehat{C}$, we have $\Per(\Omega) = \Per(C)$ and
%\begin{equation*}
%    \Per(\Omega) = \int_{\partial^* \Omega}|\nu_{\Omega}| \,d\calH^{n-1} = \int_{\mathS^{n-1}} |v| \, d\widetilde{\mu}_{\Omega}(v) =  \int_{\mathS^{n-1}} |v| \, d\widetilde{\mu}_{C}(v) = \Per(C). 
%\end{equation*}
%Moreover, by definition, we have 
\begin{equation*}
    \mathrm{E}_p(\Omega) = \Per(\Om)^2\,J_p(\mu_{\Omega}) 
    =  \Per(C)^2\,J_p(\mu_{C}) = \mathrm{E}_p(C).
\end{equation*}
\end{remark}

\begin{proposition}\label{provol}
Let $\Omega \subset \mathbb{R}^n$ be a bounded set of finite perimeter and let $C \subset \mathbb{R}^n$ be the convex body in Remark \ref{teomink}, 
%and let $C_\Om=\Per(\Om)^\frac{1}{n-1} C$, 
such that $\Per(C)=\Per(\Om)$ and $\mathrm{E}_p(\Omega) =\mathrm{E}_p(C)$.
Then, we have
\begin{equation}\label{com}
|C|\ge |\Omega|,
\end{equation}
and equality holds if and only if $\Omega=C$, up to translations and up to a set of measure zero.
\end{proposition}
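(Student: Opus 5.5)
This is the classical fact that, among sets with the same surface area measure, the convex one (the Wulff-type shape given by Minkowski's theorem) has the largest volume. I would prove it through the anisotropic isoperimetric inequality, which in turn comes from Minkowski's first inequality.

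\emph{Step 1: an anisotropic perimeter and its value on $\Omega$ and $C$.} Let $h_C$ be the support function of $C$, so that $h_C(\nu)=\sup_{y\in C} y\cdot\nu$. Consider the anisotropic perimeter
\[
\Per_{h_C}(E)=\int_{\partial^*E} h_C(\nu_E)\,d\calH^{n-1}.
\]
Since $\mu_\Omega=\mu_C$ and $\Per(\Omega)=\Per(C)$, the push-forward measures $(\nu_\Omega)_\#\calH^{n-1}\lfloor_{\partial^*\Omega}$ and $(\nu_C)_\#\calH^{n-1}\lfloor_{\partial^* C}$ coincide. Therefore
\[
\Per_{h_C}(\Omega)=\int_{\mathS^{n-1}} h_C\,d\bigl(\Per(\Omega)\mu_\Omega\bigr)=\Per_{h_C}(C)=n|C|,
\]
where the last equality is the classical formula for the volume of a convex body, $|C|=\frac1n\int_{\partial C} h_C(\nu_C)\,d\calH^{n-1}$. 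Translating $C$ so that it contains the origin does not change $\Per_{h_C}$ of a set with zero-barycenter normal measure, so we may assume $h_C\ge 0$.

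\emph{Step 2: the Wulff inequality.} I would invoke the anisotropic isoperimetric (Wulff) inequality for sets of finite perimeter, in the form
\[
\Per_{h_C}(E)\ge n|C|^{1/n}|E|^{(n-1)/n},
\]
with equality if and only if $E$ is a homothetic translate of $C$ up to null sets. This is a known result, for instance by Fonseca--M\"uller, or via the optimal transport proof of Figalli--Maggi--Pratelli, which also handles the equality case for finite perimeter sets. Combining it with Step 1 gives
\[
n|C|\ge n|C|^{1/n}|\Omega|^{(n-1)/n},
\]
hence $|C|\ge|\Omega|$, which is \eqref{com}.

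\emph{Step 3: the equality case.} If $|C|=|\Omega|$, the Wulff inequality is an equality, so $\Omega=x+\lambda C$ up to null sets. Equal volumes then force $\lambda=1$, giving $\Omega=C$ up to translation and null sets. Conversely, if $\Omega=C$ the equality is trivial.

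The main obstacle is Step 2, namely the rigidity in the Wulff inequality at the level of general sets of finite perimeter. If I wanted to avoid citing it, I would run the Gromov/Knothe map argument myself:
\begin{itemize}
\item Take the Brenier map $T=\nabla\varphi$ pushing $\chi_\Omega/|\Omega|$ to $\chi_C/|C|$.
\item Use the arithmetic--geometric mean inequality on $\det D^2\varphi$.
\item Integrate by parts, using $T\in C$ so that $T\cdot\nu_\Omega\le h_C(\nu_\Omega)$.
\item Track the equality case as in Figalli--Maggi--Pratelli, which is delicate because of the singular part of $D^2\varphi$ and because $\Omega$ may be disconnected.
\end{itemize}

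Boundedness of $\Omega$ is used only to keep this argument in a compact setting.
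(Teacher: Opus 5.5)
Your proposal is correct and follows essentially the same route as the paper: you rewrite $n|C|$ as the anisotropic perimeter $\Per_{h_C}(\Omega)$ using $\mu_\Omega=\mu_C$ and $\Per(\Omega)=\Per(C)$, then apply the Wulff inequality and its equality case (homothety plus equal volumes). Your remark that $C$ may be translated so that $h_C\ge 0$ is a detail the paper leaves implicit; it is harmless because $\int_{\partial^*\Omega}\nu_\Omega\,d\calH^{n-1}=0$. The optimal-transport sketch is unnecessary once the Wulff inequality, including its rigidity, is cited.
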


\begin{proof}
Let $h_C : \mathS^{n-1} \to \mathbb{R}$ be the support function of the set $C$, defined as 
\[
h_C(u) = \sup_{y \in C} y \cdot u.
\]
A classical result in convex geometry (see \cite[Remark 5.1.2]{S14}) expresses the volume of  $C$ in terms of its support function and its surface area:
\begin{align} \label{eq:mixed_vol}
|C| &= \frac{\Per(C)}{n} \int_{\mathS^{n-1}} h_C(u) \, d\mu_C(u) \nonumber\\
&= \frac{\Per(\Om)}{n} \int_{\mathS^{n-1}} h_C(u) \, d\mu_\Omega(u) \nonumber\\
&=  \frac{1}{n} \int_{\partial^* \Omega} h_C(\nu_\Omega(x)) \, d\mathcal{H}^{n-1}(x) = \frac{1}{n}\, \Per_C(\Omega),
\end{align}
where we denote by $P_C(\Omega)$ the anisotropic perimeter of $\Omega$ with respect to the surface tension $h_C$ (with this notation, $C$ corresponds to the Wulff shape of $\Per_C$). See \cite[Proposition 2.6]{FM91} for this correspondence. Here we have used that $\mu_\Om=\mu_C$ and $\Per(\Omega)= \Per(C)$.

%Assuming that $C$ contains the origin in its interior,
% Setting $\widetilde{\Omega} \coloneqq (\frac{|C|}{|\Omega|})^{\frac{1}{n}} \, \Omega$, we have $|\widetilde{\Omega}| = |C|$. Then, 
By Wulff's inequality (see for instance \cite{Busemann49, FM91}), we obtain that
\begin{equation}\label{wulffInequality}
    \Per_C(\Omega) \geq n \, |C|^{\frac{1}{n}} \, |\Omega|^{\frac{n-1}{n}}
\end{equation}
and the equality holds if and only if $\Omega$ is homothetic to $C$. From \eqref{eq:mixed_vol}, we know that $\Per_C(\Omega) = n\, |C|$ and thus, \eqref{wulffInequality}, we obtain
\[
    n \, |C| = \Per_C(\Omega) \geq  n \, |C|^{\frac{1}{n}} \, |\Omega|^{\frac{n-1}{n}},
\]
which implies that 
\[
    |C| \geq  |\Omega|.
\]
Here we have assumed $|C| > 0$; otherwise $\Omega$ is negligible and the inequality is trivial.
%, we divide both sides by $n |C|^{\frac{1}{n}}$ to get
%\[
%|C|^{\frac{n-1}{n}} \geq |\Omega|^{\frac{n-1}{n}},
%\]

If $|C| = |\Omega| $, then
%the anisotropic isoperimetric inequality holds as an equality:
\[
    \Per_C(\Omega)  =  \Per_C(C),
\]
so that $\Omega$ must be equal to $C$, up to translations and up to negligible sets.
\end{proof}

As shown in \cite[Theorem 4.2.1]{S14}, the weak-star convergence of probability measures is equivalent to the convergence of the corresponding convex bodies.

\begin{theorem}\label{teominkconv}
Let $\mu \in \calP_0(\mathS^{n-1})$ and let $(\mu_n)_{n \in \mathN} \subset \mathcal{P}_0(\mathS^{n-1})$. Then,
$\mu_n \rightarrow  \mu$ in the weak-star topology if and only if $\chi_{C_n} \to \chi_{C}$ in $L^1$ where $C_n$ (respectively $C$) are the convex sets corresponding to $\mu_n$ (respectively $\mu$) as in Theorem \ref{teomink}.
\end{theorem}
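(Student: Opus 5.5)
We follow the approach of \cite[Theorem 4.2.1]{S14}. After translating, normalize each $C_n$ and $C$ so that its Steiner point (or centroid) is at the origin. Note that $\Per(C_n)=\Per(C)=1$ for every $n$, since all the measures are probability measures. The key tool is the representation of the area measure as the first variation of volume, $|K|=\frac1n\int_{\mathS^{n-1}} h_K\,d\mu_K$. Mixed volumes give the analogous formula $V_1(K,L)=\frac1n\int h_L\,d\mu_K$.

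For the implication ``$L^1$ convergence implies weak-star convergence'', we first pass from convergence of characteristic functions to Hausdorff convergence of the bodies. This works because all the bodies have perimeter $1$ and, by the isodiametric-type bound for convex sets with a fixed centroid, they lie in a fixed ball. Convergence in $L^1$ of convex sets with uniformly bounded diameter and nonempty interior in the limit is then equivalent to Hausdorff convergence. Next we use that the area measure map $K\mapsto \mu_K$ is weakly continuous with respect to the Hausdorff metric. This follows from approximating by polytopes, or directly from the relation
\[
\int f\,d\mu_K=\lim_{t\to0}\frac{|K+t L_f|-|K|}{t}
\]
for $f$ a difference of support functions, together with the density of such differences in $C(\mathS^{n-1})$.

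For the converse, assume $\mu_n\to\mu$ weak-star with all barycenters zero. We need compactness and uniqueness.

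\emph{Compactness.} We have to show the $C_n$ have uniformly bounded diameter. If $\diam C_n\to\infty$ with $\Per(C_n)=1$, then $C_n$ would flatten: its area measure would concentrate near two antipodal points $\pm e$. The limit $\mu$ would then be supported on $\{\pm e\}$, hence inside a great sphere, and so contained in a closed hemisphere. This contradicts the hypothesis, implicit in Theorem \ref{teomink}, that $\mu$ is not supported in a closed hemisphere.

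To make this quantitative, we use
\[
\int |u\cdot e|\,d\mu_K(u)=2\,\mathcal H^{n-1}(\pi_{e^\perp}K).
\]
Since $\mu$ is not in a closed hemisphere, there is $c>0$ with $\int (u\cdot e)^+\,d\mu\ge c$ for all $e$. By weak convergence, the same lower bound, say $c/2$, holds for $\mu_n$ for large $n$. Hence all projections of $C_n$ have $(n-1)$-measure at least $c/4$. Combined with the perimeter bound $1$ and the estimate $\Per(K)\gtrsim \diam(K)\cdot \mathcal H^{n-2}$ of a cross-section, this bounds $\diam C_n$ uniformly.

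\emph{Uniqueness.} Blaschke selection then yields a Hausdorff-convergent subsequence $C_n\to K$. By the first part, $\mu_{C_n}\to\mu_K$, so $\mu_K=\mu$. The uniqueness part of Theorem \ref{teomink} forces $K=C$ up to translation, and the normalization of the translations pins it down. Since every subsequence has a further subsequence converging to $C$, the whole sequence converges to $C$ in Hausdorff distance, hence $\chi_{C_n}\to\chi_C$ in $L^1$.

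\emph{Main obstacle.} The hardest step is the uniform diameter bound in the compactness argument, i.e.\ excluding degeneration of the $C_n$. The projection-formula argument above is what I would try first. One further point needs care: $|C|>0$ must hold, since otherwise $L^1$ convergence carries no information. This is guaranteed because $\mu$ is not supported in a great sphere, which is exactly the setting of Theorem \ref{teomink}.
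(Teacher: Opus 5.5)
The paper gives no argument beyond the citation of \cite{S14}. Your architecture is the standard one behind that reference: weak continuity of area measures under Hausdorff convergence in one direction, and an a priori diameter bound, Blaschke selection and Minkowski uniqueness in the other. As written, however, one step rests on a false claim, and the step you single out as the main obstacle is not closed.

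The false claim is that perimeter $1$ plus a centroid normalization confines the bodies to a fixed ball. For $n\ge 3$, take the cylinders $B^{n-1}_r\times[0,L_r]$ with $L_r$ chosen so that the perimeter equals $1$ (so $L_r\sim r^{2-n}$). They have diameter tending to infinity. In the direction ``$L^1\Rightarrow$ weak-star'' your conclusion still holds, for a different reason: $|C_n|\to|C|>0$, and for convex sets a lower volume bound together with an upper perimeter bound does control the diameter. This is the fact from \cite{GWW87} used in Theorem~\ref{thm:exist-vol}. Alternatively, $L^1$ convergence to a convex body with interior points directly forces Hausdorff convergence.

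In the converse, the estimate ``$\Per(K)\gtrsim\diam(K)\cdot\mathcal{H}^{n-2}$ of a cross-section'' is too vague to combine with your lower bound on projections. The heuristic is also off: a long thin needle has area measure concentrating near the great subsphere $e^\perp\cap\mathS^{n-1}$, not near $\pm e$. What closes the gap is the formula $|K|=\frac1n\int h_K\,d\mu_K$, which you list as the key tool but never use. Translate so that $0$ and $De$ lie in $C_n$, with $D=\diam C_n$. Then $h_{C_n}(u)\ge D\,(u\cdot e)^+$, hence $n|C_n|\ge D\int(u\cdot e)^+\,d\mu_n\ge Dc/2$ for large $n$. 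This holds uniformly in $e$, since the family $\{u\mapsto(u\cdot e)^+\}_e$ is equi-Lipschitz. On the other hand, $|C_n|$ is bounded above by the isoperimetric inequality because $\Per(C_n)=1$, so $D$ is bounded.

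The same inequality settles a point you overlooked. The Blaschke limit $K$ might a priori be lower dimensional, and then your first part, formulated for $L^1$ limits with interior, does not apply. However, $1=\Per(C_n)\le\mathcal{H}^{n-1}(\mathS^{n-1})\,D^{n-1}$ bounds $D$ from below. Hence $|C_n|\ge Dc/(2n)$ is bounded below, and $|K|=\lim|C_n|>0$. Your concern about $|C|>0$ is therefore aimed at the wrong body. $C$ is given by Theorem~\ref{teomink} and has interior automatically; it is the subsequential limit whose nondegeneracy must be checked.
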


\begin{definition}\label{def:constants}
    For $n\ge 2$ and $p>0$, we define the constants
    \[
        c(n,p):=\inf_{\mu\in\mathcal{P}_0(\mathS^{n-1})} J_p(\mu), \qquad 
        C(n,p):=\sup_{\mu\in\mathcal{P}_0(\mathS^{n-1})} J_p(\mu).
    \]
\end{definition}
By the compactness of $\mathcal{P}_0(\mathS^{n-1})$ and the continuity of $J_p$ with respect to the weak-star convergence of measures, the infimum and the supremum are actually attained. With this definitions, inequality \eqref{mainineq} follows immediately.

\begin{theorem}\label{thm:mainineq}
    For any set $\Omega$ of finite perimeter, we have
    \[
        c(n,p)\,\Per(\Om)^2 \le \calE_p(\Om) \le C(n,p)\,\Per(\Om)^2.
    \]
    Moreover, for any $\varepsilon>0$ there exist 
    $\Om^-_\varepsilon,\Om^+_\varepsilon$ such that
    \[
        \calE_p(\Om^-_\varepsilon) \le (c(n,p)+\varepsilon)\,\Per(\Om^-_\varepsilon)^2, \qquad 
        \calE_p(\Om^+_\varepsilon) \ge (C(n,p)-\varepsilon)\,\Per(\Om^+_\varepsilon)^2.
    \]
\end{theorem}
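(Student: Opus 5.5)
The first inequality follows at once from Proposition \ref{prop:measure-reformulation}. I would take $\Om$ of finite perimeter and finite volume; finite volume is implicit in the definition of $\mu_\Om$ and is needed for Gauss--Green. By Gauss--Green, $\int_{\partial^*\Om}\nu_\Om\,d\calH^{n-1}=0$, so the barycenter of $\mu_\Om$ vanishes and $\mu_\Om\in\mathcal P_0(\mathS^{n-1})$. If $\Per(\Om)=0$, both sides vanish and there is nothing to prove. Otherwise
\[
\calE_p(\Om)=\Per(\Om)^2 J_p(\mu_\Om),
\]
and Definition \ref{def:constants} gives $c(n,p)\le J_p(\mu_\Om)\le C(n,p)$. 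Multiplying by $\Per(\Om)^2$ yields the two-sided bound.

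For the second part, I would use the fact that the infimum and supremum in Definition \ref{def:constants} are attained. This holds because $\mathcal P_0(\mathS^{n-1})$ is weak-star compact (the barycenter constraint is weak-star closed, since $v\mapsto v$ is continuous) and $J_p$ is weak-star continuous. So there is a minimizer $\mu^-$ and a maximizer $\mu^+$ in $\mathcal P_0(\mathS^{n-1})$. I would realize them, or approximations of them, as $\mu_\Om$ for some set $\Om$ via the Minkowski Theorem \ref{teomink}. The only obstruction is that Theorem \ref{teomink} requires the support not to lie in a closed hemisphere. A measure in $\mathcal P_0$ supported in a closed hemisphere $\{v\cdot e\ge 0\}$ must in fact live on the great sphere $\{v\cdot e=0\}$, since $\int v\cdot e\,d\mu=0$ with a nonnegative integrand. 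Such degenerate measures, for instance $\tfrac12(\delta_e+\delta_{-e})$, may well be the optimizers, and this is exactly why the statement only asks for $\varepsilon$-approximate optimal sets.

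To handle this, I would perturb. For $t\in(0,1)$ set
\[
\mu_t^\pm:=(1-t)\mu^\pm+t\,\sigma_{n-1}.
\]
This is a probability measure with zero barycenter, because $\sigma_{n-1}$ is centered. Its support is the whole sphere, so it is not contained in any closed hemisphere. Moreover $\mu_t^\pm\to\mu^\pm$ weak-star as $t\to0$. Explicitly, $J_p$ is quadratic, so
\[
J_p(\mu_t)=(1-t)^2J_p(\mu)+2t(1-t)\iint|v-w|^p\,d\mu\,d\sigma_{n-1}+t^2J_p(\sigma_{n-1}),
\]
and every term is bounded by $2^p$. Hence $|J_p(\mu_t^\pm)-J_p(\mu^\pm)|\le Ct$, and choosing $t$ small makes this at most $\varepsilon$. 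Theorem \ref{teomink} then gives a bounded convex body $C_t^\pm$ with $\Per(C_t^\pm)=1$ and $\mu_{C_t^\pm}=\mu_t^\pm$. Setting $\Om^\pm_\varepsilon:=C_t^\pm$, which has finite volume and perimeter, Proposition \ref{prop:measure-reformulation} gives
\[
\calE_p(\Om^-_\varepsilon)=J_p(\mu_t^-)\le c(n,p)+\varepsilon, \qquad \calE_p(\Om^+_\varepsilon)=J_p(\mu_t^+)\ge C(n,p)-\varepsilon,
\]
with $\Per(\Om^\pm_\varepsilon)=1$.

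The main obstacle is the hemisphere condition in Theorem \ref{teomink}, and the perturbation by the uniform measure resolves it. Everything else is bookkeeping with Proposition \ref{prop:measure-reformulation}.
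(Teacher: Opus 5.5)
Your proof is correct. For the two-sided inequality you argue exactly as the paper does: it is immediate from Proposition~\ref{prop:measure-reformulation} and Definition~\ref{def:constants}. Like the paper, you tacitly assume $|\Om|<+\infty$. The general case follows by passing to the complement: for $n\ge 2$, a set of finite perimeter in $\R^n$ has either finite measure or complement of finite measure, and $\Per$ and $\calE_p$ are unchanged when $\nu_\Om$ changes sign.

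For the approximation statement, the paper gives no argument at the point where the theorem is stated. The only approximating construction it carries out comes later, in Theorem~\ref{thm:p<2-per}, where thin boxes $[0,\varepsilon]\times[0,L_\varepsilon]^{n-1}$ approximate the antipodal measure $\frac12(\delta_v+\delta_{-v})$. The other optimizers ($\sigma_{n-1}$ and $\mu_{\rm sim}$, and for $p=2$ every measure) are realized exactly by the ball or the regular simplex. That route therefore depends on first classifying the optimizers of $J_p$ in each range of $p$.

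Your perturbation $(1-t)\mu^\pm+t\,\sigma_{n-1}$, followed by Minkowski's theorem, bypasses the classification entirely. It works uniformly for every $p>0$. It also covers the general continuous kernels $f$ in the remark after the theorem, where no classification is available. Moreover, it shows that the almost-optimal sets can always be taken to be convex bodies of unit perimeter. In exchange, the paper's construction gives an explicit geometric description of the optimizing sequences. Your observation that a centered measure supported in a closed hemisphere must live on the boundary great sphere is exactly what makes a full-support perturbation sufficient.
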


\begin{remark}
The results in this section still hold for functionals of the form
\[
\calE_f(\Om):=\iint_{\partial^*\Omega\times\partial^*\Omega}
f(|\nu_\Omega(x)-\nu_\Omega(y)|)\,d\mathcal{H}^{n-1}(x)\,d\mathcal{H}^{n-1}(y),
\]
where $f:[0,2]\to\R$ is a given continuous function.
\end{remark}

\section{Estimates for the optimal values}\label{sec:constants}

\begin{proposition}\label{prop:basic}
    For all $p>0$ we have
    \[ 
    \min\{2^{\frac{p}{2}},\,2^{p-1}\} \le c(n,p) \le C(n,p) \le 2^{p}.
    \]
\end{proposition}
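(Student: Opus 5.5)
The middle inequality $c(n,p)\le C(n,p)$ holds trivially, since both constants are the infimum and supremum of $J_p$ over the same nonempty set $\mathcal P_0(\mathS^{n-1})$. The upper bound follows from $|v-w|\le 2$ for $v,w\in\mathS^{n-1}$, which gives $J_p(\mu)\le 2^p$ for every probability measure $\mu$. The real content is the lower bound, which I would prove by treating the cases $p\ge 2$ and $p\le 2$ separately. The key identity is that for $\mu\in\mathcal P_0(\mathS^{n-1})$,
\[
J_2(\mu)=\iint (2-2v\cdot w)\,d\mu(v)\,d\mu(w)=2-2|\overline{\mu}|^2=2 .
\]

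\emph{Case $p\ge 2$.} The function $t\mapsto t^{p/2}$ is convex on $[0,\infty)$, and $\mu\otimes\mu$ is a probability measure. Applying Jensen's inequality to the function $|v-w|^2$ gives
\[
J_p(\mu)=\iint (|v-w|^2)^{p/2}\,d\mu\,d\mu\ \ge\ \Bigl(\iint |v-w|^2\,d\mu\,d\mu\Bigr)^{p/2}=2^{p/2}.
\]

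\emph{Case $0<p\le 2$.} Here I would use the pointwise bound $|v-w|^p\ge 2^{p-2}|v-w|^2$, valid because $|v-w|\le 2$ and $p-2\le 0$: writing $t=|v-w|\in[0,2]$, we have $t^p=t^2\,t^{p-2}\ge t^2\,2^{p-2}$, with the case $t=0$ trivial. Integrating against $\mu\otimes\mu$ gives $J_p(\mu)\ge 2^{p-2}\cdot 2=2^{p-1}$.

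Finally, the two cases combine into the stated form. For $p\ge2$ we have $2^{p/2}\le 2^{p-1}$, so the minimum of the two is $2^{p/2}$; for $p\le 2$ the minimum is $2^{p-1}$. In each case the bound proved matches $\min\{2^{p/2},2^{p-1}\}$. Taking the infimum over $\mu$ yields the stated lower bound on $c(n,p)$.

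The only step requiring an idea is the variance identity $J_2\equiv 2$ on $\mathcal P_0(\mathS^{n-1})$. Once it is in hand, the rest is one application of Jensen's inequality or an elementary pointwise bound, so I do not expect a genuine obstacle.
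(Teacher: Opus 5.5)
Your proof is correct and follows the paper's argument. For $p\ge 2$ you use Jensen's inequality together with $J_2\equiv 2$ on $\mathcal{P}_0(\mathS^{n-1})$, and the upper bound is the trivial one. For $0<p<2$ you use the pointwise bound $t^p\ge 2^{p-2}t^2$ on $[0,2]$, which is exactly how the paper obtains $c(n,p)=2^{p-1}$ in Proposition \ref{thm:p<2-min}, the result it defers to for that case.
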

\begin{proof}
    For all $v,w\in \mathS^{n-1}$ we have $|v-w|^p = 2^{\frac{p}{2}}(1-v\cdot w)^{\frac{p}{2}}$.
    For any $\mu\in\mathcal{P}_0(\mathS^{n-1})$, 
    and $p\ge 2$, by Jensen's inequality we have
    \[
        \iint (1-v\cdot w)^{\frac{p}{2}} \, d\mu(v)\, d\mu(w) \ge \left(\iint (1-v\cdot w)\, d\mu(v)\, d\mu(w)\right)^{\frac{p}{2}}=1^{\frac{p}{2}}=1,
    \]
    since $\iint v\cdot w\,d\mu(v)\,d\mu(w)=|\overline{\mu}|^2=0$. Thus $J_p(\mu)\ge 2^{\frac{p}{2}}$.\\ 
    For $p<2$ we will see in Proposition \ref{thm:p<2-min} below that $c(n,p)=2^{p-1}<2^{\frac{p}{2}}$.

    The upper bound $C(n,p)\le 2^{p}$ is trivial since $|v-w|\le 2$ for all $v,w$. 
    %The fact that $J_p(\sigma_{n-1})$ lies between $c(n,p)$ and $C(n,p)$ is obvious.
\end{proof}

We now consider different ranges of $p$.

%\subsection{The case $p=2$}\label{sec:p=2}

\begin{proposition}\label{prop:p=2}
    For $p=2$, we have
    \[
        J_2(\mu)=2
    \]
    for any $\mu\in\mathcal{P}_0(\mathS^{n-1})$. Consequently, $c(n,2)=C(n,2)=2$.
\end{proposition}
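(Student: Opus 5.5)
The plan is to expand the square and use the barycenter constraint; this is the same identity already used in the proof of Proposition \ref{prop:basic}, now applied with exponent exactly $p=2$, so no Jensen step is needed.

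For $v,w\in\mathS^{n-1}$ we have $|v|=|w|=1$, and hence
\[
|v-w|^2=|v|^2+|w|^2-2\,v\cdot w = 2-2\,v\cdot w .
\]
Integrating this identity against $\mu\otimes\mu$, where $\mu\in\mathcal{P}_0(\mathS^{n-1})$, gives
\[
J_2(\mu)=2\,\mu(\mathS^{n-1})^2-2\iint_{\mathS^{n-1}\times\mathS^{n-1}} v\cdot w\,d\mu(v)\,d\mu(w).
\]
The first term equals $2$ because $\mu$ is a probability measure.

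For the second term, I use that the integrand is bilinear and bounded. Fubini's theorem then gives
\[
\iint v\cdot w\,d\mu(v)\,d\mu(w)=\Bigl(\int v\,d\mu(v)\Bigr)\cdot\Bigl(\int w\,d\mu(w)\Bigr)=|\overline{\mu}|^2 .
\]
This vanishes since $\overline{\mu}=0$, so $J_2(\mu)=2$ for every $\mu\in\mathcal{P}_0(\mathS^{n-1})$.

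Taking the infimum and the supremum in Definition \ref{def:constants} yields $c(n,2)=C(n,2)=2$. There is no real obstacle here: the only point to check is the justification of Fubini, which is immediate because the integrand is continuous and bounded on a compact set.
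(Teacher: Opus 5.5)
Your proof is correct and is essentially the paper's argument: expand $|v-w|^2=2(1-v\cdot w)$, integrate against $\mu\otimes\mu$, and observe that the cross term equals $|\overline{\mu}|^2=0$. The paper leaves the Fubini justification implicit, and your remark supplying it is fine.
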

\begin{proof}
    Compute:
    \[
        |v-w|^2 = 2(1-v\cdot w).
    \]
    Integrating with respect to $\mu\otimes\mu$ gives
    \[
        J_2(\mu)=2\left(1-\int v\,d\mu(v)\cdot\int w\,d\mu(w)\right)=2.
    \]
\end{proof}
%In particular, for $p=2$ the energy $\calE_2(\Om)$ is equal to $2\,\Per(\Om)^2$.
The following Lemma is taken from \cite[Lemma 1]{Bjorck56} (see also \cite{Frostman50, BHS19}).
\begin{lemma}\label{lm:bjork}
Let $0<p<2$ and let $\nu$ be a real measure in $\mathS^{n-1}$ with $\nu(\mathS^{n-1})=0$. Then $J_p(\nu)\le 0$ and the equality holds if and only if $\nu=0$.
\end{lemma}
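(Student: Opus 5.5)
We will prove the statement (Björck's lemma) by the classical Fourier/Gaussian-average representation of the kernel $|x|^p$ for $0<p<2$, which expresses $-|x|^p$ as a conditionally positive definite function. Concretely, for $0<p<2$ and $x\in\mathR^n$ we have
\[
|x|^p = c_{n,p}\int_{\mathR^n}\frac{1-\cos(\xi\cdot x)}{|\xi|^{n+p}}\,d\xi,
\]
with $c_{n,p}>0$ finite. The integral converges near $\xi=0$ because $1-\cos(\xi\cdot x)=O(|\xi|^2)$ and $p<2$. It converges at infinity because $p>0$. The exact value of $c_{n,p}$ comes from homogeneity and rotation invariance: the integral is positive, rotation invariant, and $p$-homogeneous in $x$.

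We plug this into $J_p(\nu)$ with $v-w$ in place of $x$ and use Fubini. Fubini is justified since $|\nu|\otimes|\nu|$ is finite and the integrand is nonnegative, so Tonelli applies to $|\nu|\otimes|\nu|\otimes d\xi$, bounding by $2^p\,|\nu|(\mathS^{n-1})^2$. This gives
\[
J_p(\nu)=c_{n,p}\int_{\mathR^n}\frac{1}{|\xi|^{n+p}}\iint\bigl(1-\cos(\xi\cdot(v-w))\bigr)\,d\nu(v)\,d\nu(w)\,d\xi .
\]
Because $\nu(\mathS^{n-1})=0$, the constant term $1$ integrates to zero. Writing $\cos(\xi\cdot(v-w))=\mathrm{Re}\,e^{i\xi\cdot v}e^{-i\xi\cdot w}$, the inner double integral equals $-|\hat\nu(\xi)|^2$, where $\hat\nu(\xi)=\int e^{-i\xi\cdot v}\,d\nu(v)$. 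Hence
\[
J_p(\nu)=-c_{n,p}\int_{\mathR^n}\frac{|\hat\nu(\xi)|^2}{|\xi|^{n+p}}\,d\xi\le 0 .
\]

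For the equality case, suppose $J_p(\nu)=0$. Since $\hat\nu$ is continuous (indeed entire), this forces $\hat\nu\equiv 0$ on $\mathR^n$. Uniqueness of the Fourier transform for finite measures on $\mathR^n$ then gives $\nu=0$, viewing $\nu$ as a measure on $\mathR^n$ supported on the sphere. The converse, $\nu=0\Rightarrow J_p(\nu)=0$, is trivial.

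The main obstacle is rigor in the first step: establishing the integral formula with a positive finite constant, and justifying the exchange of integrals. The Tonelli bound above handles the exchange. Note that this is a domination of the full integrand before cancellation, not merely a use of $\nu(\mathS^{n-1})=0$.
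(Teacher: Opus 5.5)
Your proof is correct; the Tonelli domination of the nonnegative integrand and the equality case (continuity of $\hat\nu$ plus uniqueness of the Fourier--Stieltjes transform) are both handled properly. The paper gives no proof of this lemma and simply cites Bj\"orck. Your argument follows the same Fourier-representation scheme the paper uses for the companion Lemma~\ref{lem:positivity} with $2<p<4$. There the quadratic Taylor term has to be subtracted to make the integral converge, which makes the constant negative and requires the extra moment condition $\int v\,d\nu=0$; for $0<p<2$ the plain $1-\cos$ kernel converges, so only $\nu(\mathS^{n-1})=0$ is needed, exactly as you use it.
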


\begin{proposition}\label{thm:p<2-min}
    For $0<p<2$, we have
    \[
        c(n,p)=2^{p-1},
    \]
    and the minimum is attained only by measures of the form $\mu=\frac12(\delta_v+\delta_{-v})$ for some $v\in\mathS^{n-1}$. Moreover,
    \[
        C(n,p)=J_p(\sigma_{n-1}).
        %= \frac{2^{p+n-1}\Gamma\bigl(\frac{n+1}{2}\bigr)
        %\Gamma\bigl(\frac{p+n}{2}\bigr)}{\sqrt\pi\,
        %\Gamma\bigl(\frac{p+2n}{2}\bigr)}.
    \]
\end{proposition}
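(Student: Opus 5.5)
The plan is to handle the minimum and the maximum separately, using only elementary geometry for the first and Lemma \ref{lm:bjork} for the second.

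\emph{Minimum.} For $p\le 2$ we have $(|v-w|/2)^p \ge (|v-w|/2)^2$, since $|v-w|/2\in[0,1]$. Multiplying by $2^p$ gives
\[
|v-w|^p\ge 2^{p-2}|v-w|^2 .
\]
Integrating against $\mu\otimes\mu$ with $\mu\in\mathcal P_0(\mathS^{n-1})$ and applying Proposition \ref{prop:p=2}, we get
\[
J_p(\mu)\ge 2^{p-2}J_2(\mu)=2^{p-1}.
\]
For $\mu=\frac12(\delta_v+\delta_{-v})$ a direct computation gives $J_p(\mu)=\frac12\cdot 2^p=2^{p-1}$, so $c(n,p)=2^{p-1}$.

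For the equality case, note that when $p<2$ the inequality $t^p\ge t^2$ on $[0,1]$ is strict except at $t\in\{0,1\}$. Hence equality forces $|v-w|\in\{0,2\}$ for $\mu\otimes\mu$-a.e.\ $(v,w)$. It follows that any two points of $\supp\mu$ are equal or antipodal, so $\supp\mu\subset\{v,-v\}$. The barycenter condition then forces equal weights $\frac12$.

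\emph{Maximum.} Fix $\mu\in\mathcal P_0(\mathS^{n-1})$ and set $\sigma=\sigma_{n-1}$ and $\nu=\mu-\sigma$, a signed measure of total mass zero. Bilinearity gives
\[
J_p(\mu)=J_p(\sigma)+2\iint|v-w|^p\,d\sigma(v)\,d\nu(w)+J_p(\nu).
\]
By rotation invariance, the potential $U(w)=\int|v-w|^p\,d\sigma(v)$ is a constant $J_p(\sigma)$, so the middle term vanishes because $\nu(\mathS^{n-1})=0$. Lemma \ref{lm:bjork} then gives $J_p(\nu)\le 0$, hence $J_p(\mu)\le J_p(\sigma)$. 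Since $\sigma$ has zero barycenter, it is admissible, and so $C(n,p)=J_p(\sigma_{n-1})$. Uniqueness of the maximizer also follows from the strictness in the lemma, although the statement does not require it.

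The main obstacle is the rigidity step in the minimum case. One has to pass carefully from an a.e.\ equality on $\supp\mu\times\supp\mu$ to a pointwise statement, which continuity of the kernel and the definition of support allow. The remaining steps follow directly from the earlier results.
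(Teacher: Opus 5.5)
Your proof is correct. The minimum part is the same as the paper's: the pointwise bound $t^p\ge 2^{p-2}t^2$ on $[0,2]$, combined with $J_2\equiv 2$ on $\calP_0(\mathS^{n-1})$. You are in fact more careful than the paper in two places. You pass from $\mu\otimes\mu$-a.e.\ equality to a statement on $\supp\mu\times\supp\mu$ using continuity of the kernel, and you explicitly check that the antipodal measure attains $2^{p-1}$.

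For the maximum you take a different, more direct route, though both arguments rest on Lemma \ref{lm:bjork}. The paper applies the lemma to $\mu_1-\mu_2$ for arbitrary $\mu_1,\mu_2$ and obtains
\[
J_p(t\mu_1+(1-t)\mu_2)=tJ_p(\mu_1)+(1-t)J_p(\mu_2)-t(1-t)J_p(\mu_1-\mu_2).
\]
This gives strict concavity and hence a unique maximizer. The paper then asserts that this maximizer is ``necessarily'' $\sigma_{n-1}$. That step tacitly uses rotation invariance of $J_p$ and of $\calP_0(\mathS^{n-1})$: the unique maximizer must be invariant under all rotations, so it is $\sigma_{n-1}$.

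You instead expand around $\sigma_{n-1}$ and use that its potential is constant. This yields the exact identity $J_p(\mu)=J_p(\sigma_{n-1})+J_p(\mu-\sigma_{n-1})$. From it you get the bound, the identification of the maximizer, and its uniqueness at once, and the symmetry enters at a visible point.

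The paper's route gives the structural fact of strict concavity, which it mirrors as convexity for $2<p<4$. Your identity adapts to that case as well. For $\mu\in\calP_0(\mathS^{n-1})$, the difference $\mu-\sigma_{n-1}$ satisfies both moment conditions of Lemma \ref{lem:positivity}, so the same identity gives $J_p(\mu)\ge J_p(\sigma_{n-1})$, with equality only for $\mu=\sigma_{n-1}$.
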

\begin{proof}
    For any $\mu\in\mathcal{P}_0(\mathS^{n-1})$, write $|v-w|^p = 2^p \left(\frac{|v-w|}{2}\right)^p$. 
    %Since the function $t\mapsto t^p$ is concave on $[0,\infty)$ for $0<p<1$ and convex for $1<p<2$, we need a uniform bound. 
    Consider the inequality
    \[
        t^p \ge 2^{p-2} t^2 \quad \text{for } t\in[0,2],
    \]
    which follows from the fact that the function $t\mapsto t^p/t^2 = t^{p-2}$ is decreasing on $[0,2]$, and equals $2^{p-2}$ at $t=2$. Thus
    \[
        |v-w|^p \ge 2^{p-2} |v-w|^2.
    \]
    Integrating both sides of the above inequality with respect to $(v,w) \in \mathS^{n-1} \times \mathS^{n-1}$ gives
    \[
        J_p(\mu) \ge 2^{p-2} J_2(\mu) = 2^{p-2}\cdot 2 = 2^{p-1},
    \]
    since $J_2(\mu)=2$ for any $\mu\in\mathcal{P}_0(\mathS^{n-1})$. Equality holds if and only if $|v-w|$ is either 0 or 2 $(\mu\otimes\mu)$-almost everywhere, i.e., $\mu$ is supported on two antipodal points. Such a measure has zero barycenter only if the two points are opposite and carry equal mass $\frac{1}{2}$. This proves the statement for $c(n,p)$.

    Regarding the maximum, we claim that the uniform measure $\sigma_{n-1}$ maximizes $J_p$ when $0 < p < 2$. We first show that the energy $J_p$ is concave in $\calP_0(\mathS^{n-1})$. Indeed, let $\mu_1, \, \mu_2 \in \calP_0(\mathS^{n-1})$ and set $\nu \coloneqq \mu_1 - \mu_2$. By definition, we have that $\nu(\mathS^{n-1}) = 0$. By Lemma~\ref{lm:bjork}, 
    we obtain
    \begin{equation*}
        J_p(\nu) \leq 0. 
    \end{equation*}
    We then get 
    \begin{align}
        J_p(t \, \mu_1 + (1-t)\,\mu_2) &= t^2 \, J_p(\mu_1) + (1-t)^2 \, J_p(\mu_2) \nonumber\\
        &\qquad + 2\,t\,(1-t)\,\iint_{\mathS^{n-1} \times \mathS^{n-1}} | v - w|^p \,d\mu_1(v) \, d\mu_2(w) \nonumber\\
        &= t^2 \, J_p(\mu_1) + (1-t)^2 \, J_p(\mu_2) + t\,(1-t)\, \left( J_p(\mu_1) + J_p(\mu_2) - J_p(\nu) \right) \nonumber\\
        &= t\, J_p(\mu_1) + (1-t) \, J_p(\mu_2) - t\,(1-t) \, J_p(\nu) \nonumber\\
        &\geq t\, J_p(\mu_1) + (1-t) \, J_p(\mu_2)
    \end{align}
    for any $t \in [0,\,1]$, which proves the concavity of $J_p$. Since the set $\calP_0(\mathS^{n-1})$ is convex and $J_p$ is continuous, there exists a unique maximizer which is necessarily the uniform measure $\sigma_{n-1}$. 
    \end{proof}

    \begin{remark}
        Notice that the measure $\mu=\frac12(\delta_v+\delta_{-v})$ cannot be realized as $\mu_\Om$ for a set $\Om$ of finite perimeter, because this forces the set to be an unbounded slab, which has infinite perimeter. Nevertheless, the constant $c(n,p)$ is optimal in the sense that there exist sequences of sets $\Om_k$ with $\Per(\Om_k)=1$ for all $k$ and $\calE_p(\Om_k)\to 2^{p-1}$ as $k\to +\infty$. 
        %See Theorem \ref{thm:p<2-per} for the construction of such sets $\Omega_k$.
    \end{remark}

%\subsection{The case $2<p<4$}\label{sec:2<p<4}

For $p>2$, antipodal measures are the unique maximizers.

\begin{proposition}\label{max:2<p}
For $p>2$ we have
    \[
        C(n,p)=2^{p-1},
    \]
    and the maximum is attained only by measures of the form $\mu=\frac12(\delta_v+\delta_{-v})$ for $v \in \mathS^{n-1}$.
\end{proposition}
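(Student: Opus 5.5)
Mirroring the proof of the lower bound in Proposition \ref{thm:p<2-min}, I will reduce the problem to the case $p=2$ (Proposition \ref{prop:p=2}) through an elementary pointwise inequality. For $p>2$ the function $t\mapsto t^{p-2}$ is increasing on $[0,2]$, with value $2^{p-2}$ at $t=2$. Hence for $t\in[0,2]$ we have
\[
t^p = t^{p-2}\,t^2 \le 2^{p-2}\,t^2 .
\]
Equality holds exactly when $t=0$ or $t=2$.

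Applying this with $t=|v-w|$ and integrating against $\mu\otimes\mu$ for an arbitrary $\mu\in\mathcal P_0(\mathS^{n-1})$ gives
\[
J_p(\mu)\le 2^{p-2}J_2(\mu)=2^{p-2}\cdot 2 = 2^{p-1},
\]
where Proposition \ref{prop:p=2} supplies $J_2(\mu)=2$. For the reverse inequality, take $\mu=\frac12(\delta_v+\delta_{-v})$. It has zero barycenter. The pairs $(v,v)$ and $(-v,-v)$ contribute $0$, and the two mixed pairs each carry mass $\frac14$ and contribute $2^p$, so $J_p(\mu)=\frac12\cdot 2^p=2^{p-1}$. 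Therefore $C(n,p)=2^{p-1}$.

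For the characterization of maximizers, suppose $J_p(\mu)=2^{p-1}$. Then the nonnegative continuous function $2^{p-2}|v-w|^2-|v-w|^p$ has zero integral against $\mu\otimes\mu$. It must therefore vanish on $\supp(\mu\otimes\mu)=\supp\mu\times\supp\mu$. Equivalently, any two points of $\supp\mu$ are either equal or antipodal, so $\supp\mu\subset\{v,-v\}$ for some $v$. Writing $\mu=a\delta_v+(1-a)\delta_{-v}$, the barycenter condition $(2a-1)v=0$ forces $a=\frac12$.

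There is no substantial obstacle. The only point needing care is passing from ``zero integral'' to ``vanishing on the whole support''. This step uses the continuity of the integrand: if it were positive at some pair of support points, it would be positive on a neighbourhood of positive $\mu\otimes\mu$ measure, contradicting the zero integral.
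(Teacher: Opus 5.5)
Your proof is correct and follows the paper's argument: you combine the pointwise bound $t^p\le 2^{p-2}t^2$ on $[0,2]$ with $J_2(\mu)=2$ on $\mathcal{P}_0(\mathS^{n-1})$, and equality forces $|v-w|\in\{0,2\}$ on $\supp\mu\times\supp\mu$. Your equality case is more explicit than the paper's, since you use continuity to pass to the whole support, check that the antipodal measure attains $2^{p-1}$, and use the barycenter to force equal weights.
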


\begin{proof}
The result is contained in \cite[Theorem 4.6.6]{BHS19} for the proof; however we repeat the proof here for convenience. We first notice that
    \[
        |v-w|^p \le 2^{p-2}|v-w|^2, 
    \]
    which follows from $t^p\le 2^{p-2}t^2$ for $t\in[0,2]$, since $t^{p-2}$ is strictly increasing
    for $p>2$. Thus,
    \[
        J_p(\mu)\le 2^{p-2} J_2(\mu)=2^{p-1},
    \]
    with equality if and only if $|v-w|$ is equal to either $0$ or $2$ for $(\mu\otimes\mu)$-a.e. $(v,\,w) \in \mathS^{n-1} \times \mathS^{n-1}$, i.e. $\mu$ is supported on two antipodal points, hence $C(n,p)=2^{p-1}$.
\end{proof}

For $2<p<4$, the uniform measure $\sigma_{n-1}$ becomes the unique minimizer, while the maximum is still achieved by the antipodal measure.

\begin{proposition}\label{thm:2<p<4}
    For $2<p<4$ we have
    $c(n,p)=J_p(\sigma_{n-1})$, 
        %= \frac{2^{p+n-1}\Gamma\bigl(\frac{n+1}{2}\bigr)
        %\Gamma\bigl(\frac{p+n}{2}\bigr)}{\sqrt\pi\,
        %\Gamma\bigl(\frac{p+2n}{2}\bigr)},
    and $\sigma_{n-1}$ is the unique minimizer.
\end{proposition}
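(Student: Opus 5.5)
We prove that for $2<p<4$ the functional $J_p$ is strictly convex on $\mathcal{P}_0(\mathS^{n-1})$, and then use rotational symmetry. This mirrors the argument for $0<p<2$ in Proposition \ref{thm:p<2-min}, with a sign flip. The key ingredient is a Björck-type statement for $2<p<4$:
\[
J_p(\nu)\ge 0 \quad\text{for every real measure $\nu$ on $\mathS^{n-1}$ with $\nu(\mathS^{n-1})=0$ and $\int v\,d\nu(v)=0$,}
\]
with equality only if $\nu=0$. This is the classical companion to Lemma \ref{lm:bjork} (see \cite{Bjorck56, BHS19}). Heuristically, $-|x|^p$ is conditionally positive definite of order $1$ for $0<p<2$, while $|x|^p$ is conditionally positive definite of order $2$ for $2<p<4$. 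The latter means positivity on signed measures annihilating affine functions, which is exactly our condition of zero mass and zero barycenter.

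I would establish the inequality through the Fourier representation. For $2<p<4$ and $x\in\R^n$,
\[
|x|^p = c_{n,p}\int_{\R^n}\Bigl(1-\cos(\xi\cdot x)-\tfrac12(\xi\cdot x)^2\Bigr)|\xi|^{-n-p}\,d\xi,
\]
with $c_{n,p}>0$. The integral converges because the integrand is $O(|\xi|^4|x|^4)$ near $\xi=0$ and $O(|\xi|^2)$ at infinity; scaling gives homogeneity $p$, and the sign of $c_{n,p}$ is fixed since the integrand is $\le 0$. Integrating against $\nu\otimes\nu$ with $x=v-w$ handles the three pieces as follows.
\begin{itemize}
\item The constant $1$ drops out since $\nu$ has zero mass.
\item The quadratic term $(\xi\cdot(v-w))^2=(\xi\cdot v)^2-2(\xi\cdot v)(\xi\cdot w)+(\xi\cdot w)^2$ drops out, using zero mass for the first and last pieces and zero barycenter for the middle one.
\item The term $-\cos(\xi\cdot(v-w))$ gives $-|\hat\nu(\xi)|^2$.
\end{itemize}
Altogether $J_p(\nu)=-c_{n,p}\int|\hat\nu(\xi)|^2|\xi|^{-n-p}d\xi$. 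The sign convention needs care, because the pieces are separately divergent and the integral must be exchanged with the $\nu\otimes\nu$ integration. I expect the correct sign to make $J_p(\nu)\ge 0$: the integrand $1-\cos t-t^2/2$ is negative, so $c_{n,p}<0$ is forced for $|x|^p>0$. Equality then forces $\hat\nu\equiv 0$, hence $\nu=0$. This step is the main obstacle: justifying the exchange of integrals and getting the sign right. I would cite \cite[Theorem 4.6.6]{BHS19} or Björck if possible, and otherwise do the computation with a cutoff and dominated convergence.

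Given the inequality, take $\mu_1,\mu_2\in\mathcal{P}_0(\mathS^{n-1})$ and set $\nu=\mu_1-\mu_2$, which has zero mass and zero barycenter. The same algebra as in Proposition \ref{thm:p<2-min} gives
\[
J_p(t\mu_1+(1-t)\mu_2)=tJ_p(\mu_1)+(1-t)J_p(\mu_2)-t(1-t)J_p(\nu).
\]
Since $J_p(\nu)>0$ when $\mu_1\neq\mu_2$, the functional $J_p$ is strictly convex on the convex, weak-star compact set $\mathcal{P}_0(\mathS^{n-1})$, so the minimizer exists and is unique. Now $J_p$ and $\mathcal{P}_0(\mathS^{n-1})$ are invariant under the orthogonal group, so the unique minimizer $\mu^*$ is rotation invariant. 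Alternatively, averaging $\mu^*$ over $O(n)$ gives a measure with energy at most $J_p(\mu^*)$ by convexity, and uniqueness identifies it with $\mu^*$. The only rotation-invariant probability measure on $\mathS^{n-1}$ is $\sigma_{n-1}$, so $\mu^*=\sigma_{n-1}$ and $c(n,p)=J_p(\sigma_{n-1})$.
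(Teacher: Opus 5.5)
Your proposal is correct and follows essentially the same route as the paper. Lemma~\ref{lem:positivity} there proves $J_p(\nu)\ge 0$, with equality only for $\nu=0$, using the same subtracted-quadratic Fourier representation of $|v-w|^p$ with a negative constant, and the paper then deduces strict convexity of $J_p$ on $\mathcal{P}_0(\mathS^{n-1})$ exactly as you do. You also spell out the rotation-invariance step that the paper leaves implicit.

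Two small fixes. Your initial claim $c_{n,p}>0$ should read $c_{n,p}<0$, as you yourself deduce later. The exchange of integrals you flag is harmless: for $|x|\le 2$ the full integrand is dominated by $C\min\{|\xi|^4,|\xi|^2\}\,|\xi|^{-n-p}$, which is integrable when $2<p<4$, so Fubini applies before you split into the three pieces at fixed $\xi$.
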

\begin{proof}
    The proof is similar to that of Proposition \ref{thm:p<2-min}. 
    In Lemma \ref{lem:positivity} below we show that, for $2<p<4$, we have
    \begin{equation}\label{eqnu}
        J_p(\nu)\ge 0
    \end{equation}
    for any signed measure $\nu$ with 
    \begin{equation}\label{condnu}
    \nu(\mathS^{n-1})=0 \quad  \text{and} \quad \int_{\mathbb{S}^{n-1}} v\,d\nu(v)=0. 
    \end{equation}
    The equality holds if and only if $\nu=0$.   

%To show \eqref{eqnu} we expand the interaction energy in terms of spherical harmonics. 
%Let $\{Y_{k,j}\}_{j=1}^{d_k}$ denote an orthonormal basis of spherical harmonics on $\mathbb{S}^{n-1}$ of degree $k$, where $d_k$ is the dimension of the corresponding eigenspace.
%Since the interaction kernel $|x-y|^p = (2 - 2x\cdot y)^{p/2}$ depends only on the inner product $x \cdot y$, the functional $J_p(\nu)$ takes the form
%\begin{equation}\label{eq:spectral_decomp}
%J_p(\nu) = \sum_{k=0}^\infty \lambda_k(n,p) \sum_{j=1}^{d_k} |\hat{\nu}_{k,j}|^2
%\quad \text{with } \hat{\nu}_{k,j} = \int_{\mathbb{S}^{n-1}} \overline{Y_{k,j}(x)} \, d\nu(x).
%\end{equation}
%The eigenvalues $\lambda_k(n,p)$ correspond to the coefficients in the Legendre expansion of the function $t \mapsto (2(1-t))^{p/2}$, and are explicitly given by
%\begin{equation}\label{eq:eigenvalues_gamma}
%\lambda_k(n,p) = 2^{p}\, \pi^{n/2} 
%\frac{\Gamma\left(\frac{p+n}{2}\right)\,\Gamma\left(k - \frac p2\right)}
%{\Gamma\left(-\frac p2\right)\,\Gamma\left(k + \frac{p+n}{2}\right)}\,.
%\end{equation}
%The conditions \eqref{condnu} imply that $\hat{\nu}_{0,1} = 0$ and $\hat{\nu}_{1,j} = 0$ for all $j=1,\dots,d_1$, so that the sum in \eqref{eq:spectral_decomp} runs over $k \ge 2$.
%Since $2<p<4$, all the terms in \eqref{eq:eigenvalues_gamma} involving the function $\Gamma$ are strictly positive, so that $\lambda_k(n,p) > 0$ for all $k \ge 2$,
%which implies \eqref{eqnu}.
   Let $\mu_1,\,\mu_2\in\mathcal{P}_0(\mathS^{n-1})$ and set $\nu \coloneqq \mu_1 - \mu_2$. For the same reason as in the proof of Proposition \ref{thm:p<2-min}, we have
    \begin{align*}
        J_p\left(t\,\mu_1 + (1-t) \, \mu_2 \right) &= t\, J_p(\mu_1)
        + (1-t) J_p(\mu_2) - t\,(1-t)\,  J_p(\nu) \\
        &\leq t\, J_p(\mu_1)+ (1-t) \,J_p(\mu_2)
    \end{align*}
    for any $t \in [0,\,1]$. Here we have used \eqref{eqnu} in the last inequality. Thus, it follows that $J_p$ is strictly convex on $\mathcal{P}_0(\mathS^{n-1})$. Since the set $\mathcal{P}_0(\mathS^{n-1})$ is convex and $J_p$ is continuous, there exists a unique minimizer which is necessarily the uniform measure $\sigma_{n-1}$.
\end{proof}

\begin{lemma}\label{lem:positivity}
For $2<p<4$, the inequality    
\eqref{eqnu} holds for any signed measure $\nu$ on $\mathbb S^{n-1}$ 
satisfying the moment condition \eqref{condnu}. Moreover, the equality in \eqref{eqnu} holds if and only if $\nu =0$.
\end{lemma}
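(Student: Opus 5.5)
The natural route is a spherical harmonic expansion of the zonal kernel $K(v\cdot w)=|v-w|^p=2^{p/2}(1-v\cdot w)^{p/2}$, combined with the Funk--Hecke formula. Let $\{Y_{k,j}\}_{j=1}^{d_k}$ be an orthonormal basis of spherical harmonics of degree $k$ on $\mathS^{n-1}$. Write $f(t)=2^{p/2}(1-t)^{p/2}$ on $[-1,1]$ and expand it in the Gegenbauer polynomials $C_k^{\lambda}$ with $\lambda=\frac{n-2}{2}$ (for $n=2$, Chebyshev polynomials). The addition theorem then gives
\[
J_p(\nu)=\sum_{k\ge 0}\lambda_k(n,p)\sum_{j=1}^{d_k}|\hat\nu_{k,j}|^2,\qquad \hat\nu_{k,j}=\int_{\mathS^{n-1}}Y_{k,j}\,d\nu.
\]
This identity holds first for smooth densities, or for measures convolved with a zonal approximate identity, and then for general signed measures by weak-star continuity of $J_p$. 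Since $f$ is H\"older continuous, the coefficients decay like $k^{-p-n+1}$ and are summable against $d_k\sim k^{n-2}$. The moment condition \eqref{condnu} says precisely that $\hat\nu_{0,1}=0$ and $\hat\nu_{1,j}=0$ for all $j$, so only the terms with $k\ge 2$ remain.

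The central step is the computation of the eigenvalues through Funk--Hecke,
\[
\lambda_k=c_n\int_{-1}^1 (1-t)^{p/2}\,\frac{C_k^\lambda(t)}{C_k^\lambda(1)}\,(1-t^2)^{\lambda-\frac12}\,dt .
\]
Applying Rodrigues' formula and integrating by parts $k$ times (boundary terms vanish because of the weights) reduces this to a Beta integral. The outcome is
\[
\lambda_k(n,p)=c(n,p)\,\frac{\Gamma\bigl(k-\frac p2\bigr)}{\Gamma\bigl(-\frac p2\bigr)\,\Gamma\bigl(k+\frac{p}{2}+n-1\bigr)},
\]
where $c(n,p)>0$; this is the classical Riesz/Björck formula. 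As a consistency check, for $0<p<2$ we have $\Gamma(-p/2)<0$ and $\Gamma(k-p/2)>0$ for $k\ge1$, which recovers Lemma~\ref{lm:bjork}.

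For $2<p<4$ we have $-p/2\in(-2,-1)$, so $\Gamma(-p/2)>0$. For $k\ge 2$, $k-p/2>0$, so $\Gamma(k-p/2)>0$. Hence $\lambda_k>0$ for all $k\ge2$. (By contrast, $\lambda_1<0$, which is exactly why the first-moment condition is needed.) This gives $J_p(\nu)\ge0$.

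For the equality case, suppose $J_p(\nu)=0$. Then $\hat\nu_{k,j}=0$ for every $k\ge2$ because all those $\lambda_k$ are strictly positive, and the coefficients with $k=0,1$ vanish by \eqref{condnu}. A finite measure whose spherical harmonic coefficients all vanish is zero, since harmonics are dense in $C(\mathS^{n-1})$. So $\nu=0$.

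I expect the main obstacle to be the rigorous and sign-correct evaluation of $\lambda_k$, together with justifying the expansion for measures rather than functions. An alternative that avoids the exact Gamma formula is to write $(1-t)^{p/2}=(1-t)\cdot(1-t)^{p/2-1}$ with $0<p/2-1<1$ and use the fact that $-(1-t)^{s}$, $0<s<1$, is conditionally positive definite. This path is messier, so I would carry out the Rodrigues/Beta computation directly.
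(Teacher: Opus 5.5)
Your proof is correct, but it takes a genuinely different route from the paper's.

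The paper never works intrinsically on the sphere. It uses the Euclidean identity $\int_{\mathbb{R}^n}\bigl(1-\cos(\xi\cdot z)-\tfrac12(\xi\cdot z)^2\bigr)|\xi|^{-n-p}\,d\xi=K\,|z|^p$ with $K<0$. Subtracting the second-order Taylor term is what makes this integral converge: near $0$ because $p<4$, and at infinity because $p>2$. The paper inserts this into $J_p(\nu)$; the two moment conditions kill the constant and quadratic contributions, so $J_p(\nu)$ becomes a positive multiple of $\int_{\mathbb{R}^n}|\widehat{\nu}(\xi)|^2|\xi|^{-n-p}\,d\xi$. The equality case is then injectivity of the Fourier transform. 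That argument needs only the sign of one scalar integral, involves no special functions, and uses nothing about $\mathbb{S}^{n-1}$: it proves $\iint|x-y|^p\,d\nu\,d\nu\ge0$ for every compactly supported signed measure on $\mathbb{R}^n$ with vanishing moments of order $0$ and $1$.

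Your Funk--Hecke route costs more work: Rodrigues' formula, integration by parts with $f^{(k)}$ singular at $t=1$ for $k>p/2$ (harmless, since the weight $(1-t^2)^{k+(n-3)/2}$ absorbs the singularity), and the Beta integral. Your formula $\lambda_k\propto\Gamma(k-\frac p2)/\bigl(\Gamma(-\frac p2)\,\Gamma(k+\frac p2+n-1)\bigr)$ and its sign analysis are right. In return you get the whole spectrum, which gives more than the lemma:
\begin{itemize}
\item it recovers Lemma~\ref{lm:bjork} from the same computation;
\item it shows the first-moment condition cannot be dropped, since $\lambda_1<0$;
\item it explains why $J_4$ depends only on second moments, since all $\lambda_k$ with $k\ge3$ vanish at $p=4$ (cf.\ Proposition~\ref{thm:p=4});
\item it shows why the convexity argument of Proposition~\ref{thm:2<p<4} fails beyond $p=4$: for $4<p<6$ one has $\lambda_2>0$ but $\lambda_k<0$ for all $k\ge3$.
\end{itemize}

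Two small remarks. First, the decay $|\lambda_k|\sim k^{-p-n+1}$ comes from the Gamma asymptotics of your formula, not from H\"older continuity of $f$ alone. Second, once $\sum_k|\lambda_k|\,d_k<\infty$ is known, the kernel expansion converges uniformly on $\mathbb{S}^{n-1}\times\mathbb{S}^{n-1}$. This is because $\sum_j Y_{k,j}(v)Y_{k,j}(w)$ equals $d_k/\mathcal{H}^{n-1}(\mathbb{S}^{n-1})$ times a polynomial bounded by $1$. You can therefore integrate the expansion term by term against $\nu\otimes\nu$ directly, and the mollification step is unnecessary.
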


\begin{proof}
The argument is based on the representation of $J_p(\nu)$ by Fourier transform.

We first observe that the polynomial $|v-w|^p$, for $2 < p < 4$, can be expressed as %absolutely convergent integral as 
% For $0<p<2$ the Fourier transform of $|x|^{p}$, understood in the sense of tempered distributions, can be expressed (see \cite[Chapter I]{Stein70}) through the absolutely convergent integral representation
% \begin{equation}\label{eq:rep02}
% |v-w|^{p}
% = C_{n,p}
%    \int_{\mathbb{R}^{n}}
%    \frac{1-\cos\bigl(\xi\cdot(v-w)\bigr)}
%         {|\xi|^{n+p}}\,d\xi ,
% \qquad (0<p<2),
% \end{equation}
% for $v, \, w \in \mathS^{n-1}$ where the positive constant $C_{n,p}$ is given explicitly by
% \begin{equation}\label{eq:const02}
% C_{n,p}
% = \frac{2^{p}\,\pi^{n/2}\,\Gamma\bigl(\frac{p}{2}\bigr)}
%         {\Gamma\bigl(\frac{n-p}{2}\bigr)} .
% \end{equation}
% See \cite[Proposition 3.3]{DiNPV12} for the computation.
%The integral in \eqref{eq:rep02} converges absolutely because the numerator behaves like $|\xi|^2$ for small $|\xi|$, compensating the singularity $|\xi|^{-n-p}$ (since $p<2$), while for large $|\xi|$ the decay of $|\xi|^{-n-p}$ ensures integrability.

% For $2<p<4$ a direct analogue of \eqref{eq:rep02} would diverge at $\xi=0$ because the numerator $1-\cos(\xi\cdot x)$ is only of order $|\xi|^2$.  
% To obtain a convergent representation one subtracts the quadratic term of the Taylor expansion, which is justified under the moment conditions \eqref{eq:moments},  
% and the correct formula becomes
% %
\begin{equation}\label{eq:rep}
    |v-w|^{p} = K(n,p) \, \int_{\mathbb{R}^{n}} \frac{1-\cos\bigl(\xi\cdot(v-w)\bigr)-\frac12\bigl(\xi\cdot(v-w)\bigr)^{2}}{|\xi|^{n+p}}\,d\xi,
\end{equation}
with a constant $K(n,p)<0$. 
%\begin{equation}\label{eq:const}
%    C(n,p) = - \frac{2^{p-1}\Gamma\bigl(\frac{p+1}{2}\bigr)}{\pi^{\frac{n}{2}}\,\Gamma\bigl(-\frac{p}{2}\bigr)}.
%\end{equation}
%\fumihiko{I'm not sure the above expression of $C(n,p)$ is still true, but it is enough to have the negativity of $C(n,p)$ as follows.}

To check this, we first see that
\begin{equation*}
    1-\cos\bigl(\xi\cdot z\bigr)-\frac12\bigl(\xi\cdot z \bigr)^{2} = \mathrm{O}(|\xi|^4)
\end{equation*}
for any $\xi \in \mathR^n$ sufficiently small and any $z \in \mathR^n$. Thus, we have
\begin{equation*}
    \int_{B_{\varepsilon}(0)} \frac{1-\cos\bigl(\xi\cdot z \bigr)-\frac12\bigl(\xi\cdot z\bigr)^{2}}{|\xi|^{n+p}}\,d\xi \leq C' \, \int_{B_{\varepsilon}(0)} \frac{d\xi}{|\xi|^{n+p-4}} < + \infty
\end{equation*}
for $\varepsilon > 0$ sufficiently small where $C'$ is a positive constant since $p < 4$. On the other hand, we have
\begin{equation*}
    \int_{B^c_{\varepsilon}(0)} \frac{1-\cos\bigl(\xi\cdot z \bigr)-\frac12\bigl(\xi\cdot z \bigr)^{2}}{|\xi|^{n+p}}\,d\xi \leq C''\, \int_{B^c_{\varepsilon}(0)} \frac{d\xi}{|\xi|^{n+p-2}} < +\infty
\end{equation*}
for any $\varepsilon > 0$ where $C''$ is a positive constant since $p>2$. Therefore, the right-hand side of \eqref{eq:rep} is absolutely convergent for any $v,\,w \in \mathS^{n-1}$ when $2 < p < 4$. Moreover we notice, by using the rotation map $R(|\xi|\,e_1) = \xi$ for any $\xi \in \mathR^n$ where $e_1 = (1,\,0,\cdots,0) \in \mathR^n$ and the change of variables, that
\begin{align*}
    &\int_{\mathbb{R}^{n}} \frac{1-\cos\bigl(\xi\cdot(v-w)\bigr)-\frac12\bigl(\xi\cdot(v-w)\bigr)^{2}}{|\xi|^{n+p}}\,d\xi \nonumber\\
    &= \int_{\mathR^n} \frac{1-\cos(R^{-1}(\eta) \cdot (v-w)) - \frac{1}{2}\,(R^{-1}(\eta) \cdot (v-w))^2}{|\eta|^{n+p}} \,d\eta \nonumber\\
    &= \int_{\mathR^n} \frac{1-\cos(\eta \cdot (|v-w|\,e_1)) - \frac{1}{2}\,(\eta \cdot (|v-w|\,e_1))^2}{|\eta|^{n+p}} \,d\eta \nonumber\\
    &= |v-w|^p\, \int_{\mathR^n} \frac{1-\cos(\xi \cdot e_1) - \frac{1}{2}\,(\xi \cdot e_1)^2}{|\xi|^{n+p}} \,d\xi \nonumber\\
    &\eqqcolon K(n,p) \, |v-w|^p
\end{align*}
for any $v, \,w \in \mathS^{n-1}$. Now we compute the constant $C(n,p)$ and show that $C(n,p)$ is negative. By the Fubini-Tonelli theorem, we have
\begin{align*}
    K(n,p) &= \int_{\mathR^{n-1}} \int_{\mathR} \frac{1- \cos(\xi_1) - \frac{1}{2}\xi_1^2}{|\xi_1|^{n+p}\, \left( 1 + \frac{|\xi'|^2}{\xi_1^2} \right)^{\frac{n+p}{2}}} \,d\xi_1 \,d\xi' \nonumber\\
    &= \int_{\mathR^{n-1}} \frac{d\xi'}{(1+|\xi'|^2)^{\frac{n+p}{2}}} \, \int_{\mathR} \frac{1 - \cos x - \frac{1}{2}x^2}{|x|^{1+p}}\,dx. 
    % &= |\mathS^{n-2}| \, \int_{0}^{\infty} \frac{r^{n-2}}{(1+r^2)^{\frac{n+p}{2}}}\,dr \, \int_{\mathR} \frac{1 - \cos x - \frac{1}{2}x^2}{|x|^{1+p}}\,dx. 
\end{align*}
Since $2 < p < 4$, by using the integration-by-part formula, we obtain
\begin{equation*}
    \int_{\mathR} \frac{1 - \cos x - \frac{1}{2}x^2}{|x|^{1+p}}\,dx = \frac{2}{p\, (p-1)}\, \int_{0}^{\infty} \frac{\cos x - 1}{x^{p-1}}\,dx = \frac{-2}{p\,(p-1)}\,\int_{0}^{\infty} \frac{1-\cos x}{x^{p-1}}\,dx < 0,
\end{equation*}
and thus $K(n,p) < 0$.
% Note that the subtraction of the quadratic term makes the integral absolutely convergent for every $v,\, w \in \mathbb S^{n-1}$.

For a signed measure $\nu$ on $\mathbb S^{n-1}$, we define its Fourier transform $\widehat{\nu}$ as 
\[
\widehat{\nu}(\xi)=\int_{\mathbb S^{n-1}} e^{-2\pi\,i\, \xi\cdot v}\,d\nu(v)
\]
for $\xi \in \mathR^n$ (see, for instance, \cite[Chapter II]{Stein70}). Since $\nu$ is compactly supported, $\widehat{\nu}$ is an entire function of exponential type; in particular, $\widehat{\nu}$ is of class $C^{\infty}$ on $\mathbb{R}^{n}$ and all its derivatives are bounded. 
% If we choose a signed measure $\nu \in \calP(\mathS^{n-1})$ satisfying the moment conditions \eqref{condnu}, then we have
% \[
% \widehat{\nu}(0)=0 \quad \text{and} \quad  \nabla\widehat{\nu}(0)=(-2\pi\, i)\,\int_{S^{n-1}} v\,d\nu(v)=0 .
% % \tag{4}
% \]

Now, inserting \eqref{eq:rep} into the definition of $J_p(\nu)$ and using the Fubini-Tonelli theorem, we get
\begin{align}\label{alli}
    &J_p(\nu) 
    =\iint_{\mathS^{n-1}\times \mathS^{n-1}} \!\!\! \left( K(n,p)  \int_{\mathbb{R}^{n}} \frac{1-\cos\bigl(\xi\cdot(v-w)\bigr)-\frac12\bigl(\xi\cdot(v-w)\bigr)^{2}}{|\xi|^{n+p}}\,d\xi \,\right) \, d\nu(v) \, d\nu(w) \\
    &=  \int_{\mathbb{R}^{n}}\frac{K(n,p)}{|\xi|^{n+p}} \left[\iint_{\mathS^{n-1}\times \mathS^{n-1}} \bigl(1-\cos(\xi\cdot(v-w))-\tfrac12(\xi\cdot(v-w))^{2}\bigr) \, d\nu(v)\, d\nu(w)\right]\, d\xi \nonumber \\
    &= \frac{1}{(2\pi)^{p}}\int_{\mathbb{R}^{n}}\frac{K(n,p)}{|\xi|^{n+p}}\iint_{\mathS^{n-1}\times \mathS^{n-1}}\!\!\! \left(1-\cos(2\pi\, \xi\cdot(v-w))-\frac 12(2\pi\, \xi\cdot(v-w))^{2}\right) \! d\nu(v)\, d\nu(w)d\xi.\nonumber
\end{align}
%Because the triple integral is absolutely convergent (the inner integral converges uniformly for $v,w\in S^{n-1}$), we may interchange the order of integration.  
Recalling condition \eqref{condnu} we get 
%We now compute the inner double integral term by term.
%\begin{itemize}
%\item $\displaystyle\iint 1\,d\nu(v)d\nu(w)=\Bigl(\int d\nu\Bigr)^{2}=0$ by the first %condition in (1).
%\item For the cosine term we use the identity
\begin{align*}
  \iint_{\mathS^{n-1} \times \mathS^{n-1}} \cos\bigl(2\pi\, \xi\cdot(v-w)\bigr)d\nu(v)d\nu(w)
  &=\Re\Bigl(\iint_{\mathS^{n-1} \times \mathS^{n-1}} e^{2\pi\,i\,\xi\cdot(v-w)}d\nu(v)d\nu(w)\Bigr) \nonumber\\
  &=\Re\bigl(\widehat{\nu}(\xi) \, \overline{\widehat{\nu}(\xi)}\bigr)
  =|\widehat{\nu}(\xi)|^{2},
\end{align*}
where $\Re(z)$ denotes the real part of $z$, and
%\item For the quadratic term we expand
%  \[
%  (\xi\cdot(v-w))^{2}=(\xi\cdot v)^{2}+(\xi\cdot w)^{2}-2(\xi\cdot v)(\xi\cdot w).
%  \]
%  Hence, using $\int v\,d\nu(v)=0$,
\begin{align*}
  \iint_{\mathS^{n-1} \times \mathS^{n-1}} (2\pi\xi\cdot(v-w))^2 \, d\nu(v) \, d\nu(w) &= 2 \int_{\mathS^{n-1}} (2\pi\xi\cdot v)^{2} \, d\nu(v)\,  \int_{\mathS^{n-1}}\, d\nu(w)\\
  &\quad - 2 \left(\int_{\mathS^{n-1}} 2\pi\xi\cdot v\,d\nu(v) \right)^{2} =0. 
  %&= 2\int_{\mathS^{n-1}} (2\pi\xi\cdot v)^{2} \, d\nu(v),
\end{align*}
Plugging these identities into \eqref{alli}, we eventually obtain
\begin{equation}\label{eqJp}
    J_p(\nu) = -\,\frac{K(n,p)}{(2\pi)^p} \, \int_{\mathR^n} \frac{|\widehat{\nu}(\xi)|^{2}}{|\xi|^{n+p}} 
    %\left( -|\widehat{\nu}(\xi)|^{2}-\frac12\cdot2 \, \int_{\mathS^{n-1}}(2\pi\,\xi\cdot v)^{2}\, d\nu(v) \right)
    \, d\xi .
\end{equation}
Since $K(n,p) < 0$ and the integrand is non‑negative, we conclude that $J_p(\nu) \ge 0$.

If $J_p(\nu)=0$, then $|\widehat{\nu}(\xi)|^{2}=0$ for a.e. $\xi \in \mathR^n$, so that $\widehat{\nu}=0$, namely, $\nu=0$. 
%This completes the proof.
\end{proof}

\begin{proposition}\label{thm:p=4}
    For $p=4$, we have
    \[
        c(n,4)=4+\frac{4}{n},
    \]
    and a measure $\mu\in\mathcal{P}_0(\mathS^{n-1})$ minimizes $J_4$ if and only if it satisfies the isotropy condition
    \begin{equation}\label{isotropy}
        \iint_{\mathS^{n-1} \times \mathS^{n-1}} v_i \, w_j \,d\mu(v)\, d\mu(w)=\frac{1}{n}\delta_{ij}, \qquad i,j=1,\dots,n.
    \end{equation}
    In particular, both $\sigma_{n-1}$ and $\mu_{\rm sim}$ are minimizers.
\end{proposition}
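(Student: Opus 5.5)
The plan is to expand the kernel algebraically, since for $p=4$ it is a polynomial in $v\cdot w$. For $v,w\in\mathS^{n-1}$ we have $|v-w|^2=2(1-v\cdot w)$, hence
\[
|v-w|^4=4\bigl(1-2\,v\cdot w+(v\cdot w)^2\bigr).
\]
Integrating against $\mu\otimes\mu$ with $\mu\in\mathcal P_0(\mathS^{n-1})$, the constant term gives $4$. The linear term gives $-8|\overline\mu|^2=0$, exactly as in the proof of Proposition~\ref{prop:p=2}. We are left with
\[
J_4(\mu)=4+4\iint (v\cdot w)^2\,d\mu(v)\,d\mu(w).
\]

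Next, introduce the second moment matrix $M_{ij}:=\int v_iv_j\,d\mu(v)$. It is symmetric and positive semidefinite, and $\operatorname{tr}M=\int|v|^2\,d\mu=1$. Expanding $(v\cdot w)^2=\sum_{i,j}v_iv_jw_iw_j$ and using Fubini gives
\[
\iint (v\cdot w)^2\,d\mu\,d\mu=\sum_{i,j}M_{ij}^2=\|M\|_F^2 .
\]
By Cauchy--Schwarz for the Frobenius inner product with the identity,
\[
1=(\operatorname{tr}M)^2=\langle M,I\rangle^2\le n\,\|M\|_F^2 ,
\]
so $\|M\|_F^2\ge 1/n$. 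Equality holds if and only if $M$ is a multiple of $I$, that is $M=\frac1n I$. This gives $J_4(\mu)\ge 4+\frac4n$, with equality if and only if $\int v_iv_j\,d\mu=\frac1n\delta_{ij}$.

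This is the isotropy condition \eqref{isotropy}, read as a condition on the second moments of $\mu$. Literally, the double integral $\iint v_iw_j\,d\mu(v)\,d\mu(w)=\overline\mu_i\overline\mu_j$ vanishes for every $\mu\in\mathcal P_0(\mathS^{n-1})$, so the condition must be interpreted via $M$. I will make this reading explicit in the proof.

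It remains to show that the bound is attained, which also identifies $c(n,4)$. For $\sigma_{n-1}$, rotation invariance forces $M$ to commute with all rotations, hence $M=\lambda I$, and the trace condition gives $\lambda=1/n$. For $\mu_{\rm sim}$, the vertices satisfy $\sum_i v_i=0$ and $v_i\cdot v_j=-\frac1n$ for $i\ne j$. The matrix $A=\sum_i v_iv_i^{T}$ is invariant under the symmetry group of the simplex, which acts irreducibly on $\mathR^n$, so $A$ is a multiple of $I$. Since $\operatorname{tr}A=n+1$, we get $A=\frac{n+1}{n}I$, and therefore $M=\frac1{n+1}A=\frac1nI$. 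Alternatively, one can plug the values $v_i\cdot v_j=-\frac1n$ directly into $\|M\|_F^2=\frac1{(n+1)^2}\sum_{i,j}(v_i\cdot v_j)^2$, which gives $\frac1{(n+1)^2}\bigl((n+1)+(n+1)n\cdot\frac1{n^2}\bigr)=\frac1n$. Both measures have zero barycenter, so both are minimizers and $c(n,4)=4+\frac4n$.

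No step is a real obstacle. The only delicate point is the correct interpretation of the isotropy condition as a second-moment condition, discussed above.
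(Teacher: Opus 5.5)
Your proposal is correct and follows essentially the same route as the paper: expand $|v-w|^4=4(1-2v\cdot w+(v\cdot w)^2)$, use the barycenter condition to kill the linear term, rewrite the quadratic term as $\operatorname{tr}(M^2)=\sum_{i,j}M_{ij}^2$, and apply Cauchy--Schwarz, with equality exactly when $M=\frac1n I$; then check that $\sigma_{n-1}$ and $\mu_{\rm sim}$ are isotropic. You are right that the displayed condition must be read as $\int_{\mathS^{n-1}} v_iv_j\,d\mu(v)=\frac1n\delta_{ij}$, since $\iint v_iw_j\,d\mu\,d\mu=\overline{\mu}_i\overline{\mu}_j$ vanishes on $\mathcal{P}_0(\mathS^{n-1})$; this second-moment condition is what the paper's proof actually establishes, and your explicit check for $\mu_{\rm sim}$ fills in the paper's brief ``due to symmetry''.
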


\begin{proof}
    We compute
    \[
        |v-w|^4 = (2-2v\cdot w)^2 = 4(1-v\cdot w)^2 = 4(1-2v\cdot w+(v\cdot w)^2).
    \]
    Integrating and using $\int_{\mathS^{n-1}} v\,d\mu(v)=0$ gives
    \[
        J_4(\mu)= 4 \left(1+ \iint (v\cdot w)^2 \, d\mu(v)\, d\mu(w)\right).
    \]
    Let now $M$ be the matrix with entries $M_{ij} =\int v_i v_j\,d\mu(v)$. 
    Then \[
    \iint (v\cdot w)^2\,d\mu(v)d\mu(w) = \sum_{i,j} (M_{ij})_{1\leq i, \,j \leq n}^2 = \mathrm{tr}(M^2).
    \] 
    Note that $M$ is symmetric, positive semidefinite, and $\mathrm{tr}(M)=\int_{\mathS^{n-1}} |v|^2\,d\mu(v)=1$. By the Cauchy–Schwarz inequality, we have
    \begin{equation}\label{emmedue}
        \mathrm{tr}(M^2) \ge \frac{1}{n} (\mathrm{tr}(M))^2 = \frac{1}{n},
    \end{equation}
    with equality if and only if $M=\frac{1}{n}I$. Thus
    \[
        J_4(\mu) = 4\left(1+ \mathrm{tr}(M^2)\right) \ge 4\left(1+\frac{1}{n}\right),
    \]
    with equality if and only if $M=\frac{1}{n}I$, i.e.,
    \[
        \int_{\mathS^{n-1}} v_i \, v_j\,d\mu(v) = \frac{1}{n}\delta_{ij}.
    \]
    %This condition implies isotropy of the second moments. 
    One can check that, due to symmetry,
    for $\sigma_{n-1}$ and $\mu_{\rm sim}$ it holds $M=\frac{1}{n}I$, thus both measures are minimizers.
\end{proof}

For $p>4$, the minimization problem becomes more delicate.
We first state the following auxiliary result.

\begin{lemma}\label{lem:two-point}
Let $p>4$, $A\in (0,1]$ and $f(t):=(1-t)^{\frac{p}{2}}$. Then every minimizer $\nu$ of
\[
F(A):=\min \left\{ \int_{-1}^1 f(t)\,d\nu(t) :\, \nu\in\mathcal{P}([-1,1]),\;
\int t\,d\nu=0,\; \int t^2\,d\nu=A \right\}
\]
is supported on exactly two points.
\end{lemma}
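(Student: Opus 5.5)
The plan is to treat this as a finite-dimensional moment problem and to show, via a supporting hyperplane (Lagrange multiplier) argument, that every minimizer is concentrated on the contact set between $f$ and a quadratic polynomial lying below it. The sign of $f'''$ will then force that contact set to have at most two points.

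\emph{Existence and the case $A=1$.} A minimizer exists, since the constraint set is weak-star compact in $\mathcal P([-1,1])$ and $f$ is continuous. If $A=1$, then $\int (1-t^2)\,d\nu=0$ forces $\supp\nu\subset\{-1,1\}$. Combined with $\int t\,d\nu=0$, this gives $\nu=\frac12(\delta_{-1}+\delta_1)$, which has exactly two support points. So from now on I assume $0<A<1$.

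\emph{Multipliers.} Consider the curve $\gamma(t)=(t,t^2,f(t))\in\R^3$ for $t\in[-1,1]$, and its convex hull $K=\{\int\gamma\,d\nu:\nu\in\mathcal P([-1,1])\}$, which is compact and convex. The point $(0,A,F(A))$ lies on the lower boundary of $K$. Hence there is a supporting hyperplane $(x,y,z)\mapsto \alpha z - bx - cy$ with $(\alpha,b,c)\neq0$ and $\alpha\ge0$.

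I claim $\alpha>0$. Otherwise $(0,A)$ would lie on the boundary of the projection $\{(\int t\,d\nu,\int t^2\,d\nu)\}$, which is the convex hull of the parabola arc $\{(t,t^2)\}$. But for $0<A<1$ the point $(0,A)$ is interior to that hull. Normalizing $\alpha=1$ yields a quadratic $q(t)=a+bt+ct^2$ such that
\[
g:=f-q\ge 0 \ \text{on } [-1,1], \qquad \int g\,d\nu=0 .
\]
So every minimizer $\nu$ satisfies $\supp\nu\subset Z:=\{g=0\}$. Indeed, any minimizer $\nu$ realizes the point $(0,A,F(A))$, so the same hyperplane serves for all minimizers.

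\emph{Counting contact points.} For $p>4$ we have
\[
f'''(t)=-\tfrac p2\bigl(\tfrac p2-1\bigr)\bigl(\tfrac p2-2\bigr)(1-t)^{\frac p2-3}<0 \quad\text{on } [-1,1).
\]
Since $q'''=0$, the function $g''$ is strictly decreasing, so $g'$ is strictly concave and has at most two zeros in $[-1,1]$.

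Suppose $Z$ contained three points $t_1<t_2<t_3$. Then $t_2$ is an interior minimum of $g\ge0$, so $g'(t_2)=0$. By Rolle's theorem, $g'$ also vanishes at some $s_1\in(t_1,t_2)$ and some $s_2\in(t_2,t_3)$. That gives three distinct zeros of $g'$, a contradiction. Hence $\#Z\le2$.

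Finally, $\nu$ cannot be a single Dirac mass. The constraint $\int t\,d\nu=0$ would force $\nu=\delta_0$, and then $\int t^2\,d\nu=0\neq A$. Therefore $\nu$ is supported on exactly two points.

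\emph{Main obstacle.} The delicate step is justifying the multiplier $\alpha>0$, that is, nondegeneracy of the supporting hyperplane. It is needed to obtain a genuine quadratic minorant $q$ valid for every minimizer simultaneously, and it is what requires separating off the boundary case $A=1$. The counting argument itself is routine once $f'''<0$ is observed.
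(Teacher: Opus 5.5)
Your proof is correct, and it takes a genuinely different route from the paper's.

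\textbf{The paper's route.} The paper argues through extreme points. By Dubins' theorem, every extreme point of the constraint set has at most three atoms. For a three-atom minimizer, the paper writes the Lagrange stationarity conditions in the variables $(p_i,t_i)$. This gives a quadratic $Q$ with $f(t_i)=Q(t_i)$ and $f'(t_2)=Q'(t_2)$. Applying Rolle twice produces two zeros of $f''-2\lambda_2$, which is impossible because $f''$ is strictly decreasing. Finally, Choquet's theorem is used to pass from extreme minimizers to arbitrary ones.

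\textbf{Your route.} You obtain the multipliers globally, from a supporting hyperplane to the moment body of $t\mapsto(t,t^2,f(t))$. This yields the inequality $f\ge q$ on all of $[-1,1]$, not just stationarity at the atoms. That has three consequences:
\begin{itemize}
\item $g'(t_2)=0$ is automatic at an interior contact point.
\item The support of every minimizer, atomic or not, is confined to the contact set $\{f=q\}$ at once.
\item You no longer need Dubins' theorem, the finite-dimensional Lagrange computation, or the Choquet step.
\end{itemize}
Your counting step (strict concavity of $g'$ forbids three zeros) is the same use of $f'''<0$ as the paper's double Rolle argument.

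As a bonus, your argument also proves that the minimizer of $F(A)$ is unique. The contact set must consist of exactly two points $a<b$, and the barycenter condition then fixes the weights. The paper only obtains this later, through the explicit computation in Theorem~\ref{thm:main}.

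The price of your approach is the nondegeneracy $\alpha>0$. This is a Slater-type condition: it requires $(0,A)$ to lie in the interior of $\{x^2\le y\le 1\}$, and hence forces the separate treatment of $A=1$. You handle both points correctly.

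\textbf{One step to tighten.} To get $\alpha\ge 0$, support the closed convex set $K+\{0\}\times\{0\}\times[0,\infty)$ at $(0,A,F(A))$ instead of $K$ itself. Since $(0,A,F(A)+s)$ lies in this set for every $s\ge0$, any supporting normal must have nonnegative third component.
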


% By the Krein--Milman theorem, we have that that $\calA = \mathrm{co}(\mathrm{ex}(\calA))$ where $\mathrm{co}(E)$ is the closure of a set $E$ and $\mathrm{ex}(E)$ is the set of extreme points of $E$.

\begin{proof}
Recall 
that the extreme points of $\calP([-1,\,1])$ are Dirac measures on $[-1,\,1]$ (see for instance \cite[Proposition 10.1.3]{Edwards95}), and define
\[
\calA:=\left\{ \nu\in\mathcal{P}([-1,1]):\, \int t\,d\nu=0,\; \int t^2\,d\nu=A \right\}.
\]
Since $\calA$ is the intersection of $\mathcal{P}([-1,1])$ with two closed hyperplanes,
by \cite[Main Theorem]{Dubins62} (see also \cite{Winkler88})
we have that any extreme point of $\calA$ is supported on at most three extreme points of $\mathcal{P}([-1,1])$, that is, it is supported in at most three distinct points.
Moreover, by linearity of the energy in $F(A)$, the minimum is attained on the extreme points of $\calA$.

We now show that a minimizer of the energy that is also an extreme point of $\calA$ has exactly two distinct points in its support. First of all, it is easy to see that such  minimizer cannot be a measure supported only on one point because of the constraint. We now suppose, by contradiction, that a minimizer $\nu$ is supported on three distinct points $t_1 < t_2 < t_3$ with weights $p_1, p_2, p_3 > 0$ such that $p_1 + p_2 + p_3 = 1$. Consider the function
\[
\mathcal{L}(\nu, \lambda_0, \lambda_1, \lambda_2) = \int_{-1}^1 f(t)\,d\nu(t) 
- \lambda_0\left(\int_{-1}^1 d\nu(t) - 1\right) 
- \lambda_1\int_{-1}^1 t\,d\nu(t) 
- \lambda_2\left(\int_{-1}^1 t^2\,d\nu(t) - A\right),
\]
where $\lambda_0, \lambda_1, \lambda_2 \in \mathbb{R}$ are Lagrange multipliers.
For the measure $\nu = \sum_{i=1}^3 p_i\delta_{t_i}$, $\mathcal{L}$ becomes a function of the parameters $\{p_i, t_i\}$
\[
L(p_1,p_2,p_3,t_1,t_2,t_3) = \sum_{i=1}^3 p_i f(t_i) 
- \lambda_0\left(\sum_{i=1}^3 p_i - 1\right) 
- \lambda_1\sum_{i=1}^3 p_i t_i 
- \lambda_2\left(\sum_{i=1}^3 p_i t_i^2 - A\right).
\]
At an extreme point, the gradient of $L$ with respect to all free variables must vanish. This yields
\[
\frac{\partial L}{\partial p_i} = f(t_i) - \lambda_0 - \lambda_1 t_i - \lambda_2 t_i^2 = 0, \qquad i=1,2,3.
\]
Thus for each support point $t_i$ we have
\begin{equation}\label{eq:support}
f(t_i) = \lambda_0 + \lambda_1 t_i + \lambda_2 t_i^2. 
\end{equation}
Since $t_2\in (-1,1)$, we also have
\[
\frac{\partial L}{\partial t_2} = p_2 f'(t_2) - \lambda_1 p_2- 2\lambda_2 p_2 t_2 = 0.
\]
Since $p_2 > 0$, we obtain
\begin{equation}\label{eq:derivative}
f'(t_2) = \lambda_1 + 2\lambda_2 t_2. 
\end{equation}
Define the quadratic polynomial
\[
Q(t) := \lambda_0 + \lambda_1 t + \lambda_2 t^2.
\]
Equation \eqref{eq:support} implies that $f(t_i) = Q(t_i)$ for $i=1,2,3$, and Equation \eqref{eq:derivative} implies that $f'(t_2) = Q'(t_2)$. 
Consequently, the difference function
\[
g(t) := f(t) - Q(t)
\]
satisfies
\[
g(t_i) = 0 \quad\text{for } i=1,2,3\quad \text{and}\quad g'(t_2) = 0.
\]
%Thus each $t_i$ is a zero of $g$ of multiplicity at least two.
Since $g$ is smooth on $[-1,1]$, Rolle's Theorem applied to $g$ on the intervals $[t_1, t_2]$ and $[t_2, t_3]$ yields points $\eta_1 \in (t_1, t_2)$ and $\eta_2 \in (t_2, t_3)$ such that
\[
g'(\eta_1) =  g'(\eta_2) = 0.
\]
Applying again Rolle's Theorem to the function $g'$ on the intervals $[\eta_1, t_2]$ and $[t_2, \eta_2]$, we find points $\xi_1 \in (\eta_1, t_2)$ and $\xi_2 \in (t_2, \eta_2)$ such that
\[
g''(\xi_1) = g''(\xi_2) = 0.
\]
However, we have
\[
g''(t) = f''(t) - 2\lambda_2.
\]
and
\[
f''(t) = \frac{p}{2}\left(\frac{p}{2}-1\right)(1-t)^{\frac{p}{2}-2}
\]
for any $t \in [-1,\,1]$. Since $\frac{p}{2}-2 > 0$, the factor $(1-t)^{\frac{p}{2}-2}$ is strictly decreasing on $[-1,1]$. Moreover, $\frac{p}{2}(\frac{p}{2}-1) > 0$, hence $f''(t)$ itself is strictly decreasing on $[-1,1]$. Consequently, the equation $g''(t) = f''(t) - 2\lambda_2 = 0$ can have at most one solution, contradicting the existence of two distinct points $\xi_1 \neq \xi_2$ with $g''(\xi_1)=g''(\xi_2)=0$.
Therefore, a minimizer cannot be supported in three distinct points.

A completely analogous argument, by restricting to three points contained in the support,
excludes that a minimizer is a discrete measure supported in more than two points.

Assume now that there exists a minimizer $\nu^*$ which is not an extreme point of $\calA$. 
Then, by Choquet Theorem (see for instance \cite[Theorem 10.1.7]{Edwards95}) 
we can write
\[
\nu^* = \int_{\mathcal E} e\,d\mu(e),
\]
where $\mu$ is a probability measure on the set $\mathcal E$ of the extreme points of $\calA$. If $\nu^*$ is not an extreme point, then 
the support of $\mu$ contains at least two elements $\nu_1$, $\nu_2$, which are also minimizers (by linearity of the energy). As a consequence, the discrete measure $\frac{\nu_1+\nu_2}{2}$ would be a minimizer supported on three or four points, leading to a contradiction. It follows that every minimizer is an extreme point of $\calA$, and it is supported on exactly two points. 
\end{proof}

\begin{theorem}\label{thm:main}
For $p > 4$ 
the minimum of $J_p$ is attained uniquely by the measure $\mu_{\mathrm{sim}}$.
In particular,
\[
c(n,p) = 2^{\frac{p}{2}}\left(1+\frac{1}{n}\right)^{\frac{p}{2}-1}.
\]
\end{theorem}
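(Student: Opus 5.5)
The plan is to push everything down to a one-dimensional moment problem for the distribution of the inner product $t=v\cdot w$, and then use Lemma~\ref{lem:two-point}.

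\textbf{Reduction.} Given $\mu\in\mathcal P_0(\mathS^{n-1})$, let $\nu\in\mathcal P([-1,1])$ be the push-forward of $\mu\otimes\mu$ under $(v,w)\mapsto v\cdot w$. Since $|v-w|^p=2^{p/2}(1-v\cdot w)^{p/2}$, we get
\[
J_p(\mu)=2^{\frac p2}\int_{-1}^1 f\,d\nu, \qquad f(t)=(1-t)^{\frac p2}.
\]
The moments of $\nu$ are
\[
\int t\,d\nu=|\overline\mu|^2=0, \qquad \int t^2\,d\nu=\mathrm{tr}(M^2)=:A,
\]
where $M$ is the second moment matrix of $\mu$ from the proof of Proposition~\ref{thm:p=4}. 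By \eqref{emmedue} we have $A\ge \frac1n$, and trivially $A\le 1$. Hence $J_p(\mu)\ge 2^{p/2}F(A)$, and it remains to compute $F(A)$ and minimize it over $A\in[\frac1n,1]$.

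\textbf{Computing $F(A)$.} By Lemma~\ref{lem:two-point}, every minimizer of $F(A)$ has the form $q\delta_b+(1-q)\delta_a$ with $-1\le a<0<b\le 1$. The two moment constraints give $ab=-A$ and $q=A/(b^2+A)$. This forces $b\ge A$, since $a\ge -1$. So
\[
F(A)=\min_{b\in[A,1]}\varphi(b), \qquad \varphi(b)=\frac{A\,f(b)+b^2 f(-A/b)}{b^2+A}.
\]
The key claim is that for $p>4$ the minimum is attained only at $b=1$, with value
\[
\varphi(1)=\frac{(1+A)^{p/2}}{1+A}=(1+A)^{\frac p2-1}.
\]
To prove it, I would substitute $s=A/b$ and study the sign of $\varphi'$ directly. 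I would try to show that $\varphi$ is strictly decreasing on $[A,1]$, using convexity of $t\mapsto(1-t)^{p/2-1}$ together with the fact that $f''$ is decreasing, as in the lemma. If monotonicity is awkward, the fallback is to compare $\varphi(b)$ with $\varphi(1)$ through a tangent-quadratic argument. This step is where I expect the main difficulty: the condition $p>4$ must enter here, because for $p=4$ the whole family is degenerate (Proposition~\ref{thm:p=4}).

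\textbf{Conclusion and uniqueness.} Since $(1+A)^{p/2-1}$ is increasing in $A$, we obtain
\[
J_p(\mu)\ge 2^{\frac p2}\Bigl(1+\frac1n\Bigr)^{\frac p2-1},
\]
and the explicit value of $J_p(\mu_{\rm sim})$ shows that equality is attained by $\mu_{\rm sim}$. For uniqueness, equality forces $A=\frac1n$ and $\nu=\frac1{n+1}\delta_1+\frac n{n+1}\delta_{-1/n}$. Then $v\cdot w\in\{1,-\frac1n\}$ for $\mu\otimes\mu$-a.e.\ $(v,w)$, so distinct points of $\operatorname{supp}\mu$ have inner product $-\frac1n$. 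Hence $\mu$ is atomic on at most $n+1$ points, namely the vertices of a regular simplex or a subset of them. Writing $m_i$ for the masses of the atoms, the condition $\nu(\{1\})=\sum_i m_i^2=\frac1{n+1}$ with $\sum_i m_i=1$ forces exactly $n+1$ atoms of equal mass $\frac1{n+1}$, by Cauchy--Schwarz. Therefore $\mu=\mu_{\rm sim}$.
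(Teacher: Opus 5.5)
\textbf{Gap: the key minimization step is not proved.} Several parts of your proposal match the paper's proof: the reduction to the moment problem for $\nu=(v\cdot w)_\#(\mu\otimes\mu)$, the appeal to Lemma~\ref{lem:two-point}, the parametrization by the positive atom $b\in[A,1]$, and the monotonicity in $A$ combined with $A\ge\frac1n$. Your equality-case argument via $\sum_i m_i^2=\frac{1}{n+1}$ and Cauchy--Schwarz is a valid alternative to the paper's barycenter argument. What is missing is the step that carries the content of the theorem: that $\varphi$ attains its minimum on $[A,1]$ only at $b=1$. You announce it as the key claim and list two possible strategies, but you leave open which one works. Both the lower bound and the uniqueness depend on it. Uniqueness needs the minimizer of $F(\frac1n)$ to be unique, i.e.\ \emph{strict} minimality at $b=1$. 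So as written the proposal does not establish the theorem.

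\textbf{How to close it.} Your instinct about convexity is the right one, and the step closes quickly. With $D=b^2+A$ one computes
\[
\varphi'(b)=\frac{A}{D}\Bigl[f'(b)+f'\bigl(-\tfrac{A}{b}\bigr)+\frac{2b}{D}\Bigl(f\bigl(-\tfrac{A}{b}\bigr)-f(b)\Bigr)\Bigr].
\]
Put $x=1-b$, $y=1+\frac{A}{b}$ and $h(s)=\frac{p}{2}s^{\frac{p}{2}-1}$. Then $0\le x<y$ and $y-x=D/b$. Moreover $f(-\frac{A}{b})-f(b)=\int_x^y h(s)\,ds$ and $f'(b)+f'(-\frac{A}{b})=-(h(x)+h(y))$. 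So the bracket equals
\[
\frac{2}{y-x}\int_x^y h(s)\,ds-\bigl(h(x)+h(y)\bigr).
\]
This is strictly negative because $h$ is strictly convex on $[0,\infty)$ precisely when $p>4$: the trapezoidal rule strictly overestimates the integral of a strictly convex function. For $p=4$, $h$ is linear and the bracket vanishes, which is the degeneracy you anticipated. Hence $\varphi$ is strictly decreasing on $[A,1]$, and $F(A)=\varphi(1)=(1+A)^{\frac{p}{2}-1}$. This value is attained only by $\nu_A=\frac{A}{1+A}\delta_1+\frac{1}{1+A}\delta_{-A}$.

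The paper performs the same substitution. It then verifies the equivalent inequality $\alpha(1-t)(1+t^{\alpha-1})>2(1-t^{\alpha})$, with $t=x/y\in(0,1)$ and $\alpha=\frac{p}{2}$, by a direct derivative computation instead of the trapezoid observation.
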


\begin{proof}
For $\mu \in \mathcal{P}_0(\mathbb{S}^{n-1})$ consider the push-forward measure $\nu$ of $\mu\otimes\mu$ under the map $(v,w)\mapsto v\cdot w$. Then $\nu$ is a probability measure on $[-1,1]$ satisfying
\[
\int_{-1}^1 t \, d\nu(t)=0, \qquad \int_{-1}^1 t^2 \, d\nu(t)=A,
\]
where
\[
A:=\iint_{\mathbb{S}^{n-1}\times\mathbb{S}^{n-1}} (v\cdot w)^2 \, d\mu(v)\,d\mu(w)\in (0,1].
\]
Since $|v-w|^p = (2-2v\cdot w)^{\frac{p}{2}}=2^{\frac{p}{2}}(1-v\cdot w)^{\frac{p}{2}}$, we have
\begin{equation}\label{eq:Jp-nu}
J_p(\mu)=2^{\frac{p}{2}}\int_{-1}^1 (1-t)^{\frac{p}{2}}\,d\nu(t).
\end{equation}
For $A\in(0,1]$ we consider the minimum problem
\[
F(A):=\min \left\{ \int_{-1}^1 f(t)\,d\nu(t) \,  : \, \nu\in\mathcal{P}([-1,1]),\;
\int t\,d\nu=0,\; \int t^2\,d\nu=A \right\},
\]
where $f(t):=(1-t)^{\frac{p}{2}}$.
By Lemma~\ref{lem:two-point} we can restrict to measures of the form $\nu=p\delta_a+(1-p)\delta_b$ with $a,b\in[-1,1]$, $\lambda\in(0,1)$. The moment conditions give
\begin{equation}\label{eq:moments}
    \lambda \, a+(1-\lambda)\, b=0 \quad \text{and} \quad  \lambda\, a^2+(1-\lambda)\, b^2=A.
\end{equation}
From the condition $\lambda\, a+(1-\lambda)\, b=0$, we get $b=-\dfrac{\lambda}{1-\lambda}\,a$, hence
\[
    a^2=\frac{A\, (1-\lambda)}{\lambda},\quad b=-\frac{A}{a}, \quad \text{and} \quad \lambda = \frac{A}{A+a^2}.
\]
By symmetry we can assume $a\ge0$, then the constraints $a\le1$, $b\ge-1$ imply $a\in[A,1]$. 
Define
\[
    G(a):=\frac{A}{A+a^2}\, f(a)+\frac{a^2}{A+a^2}\, f\left(-\frac{A}{a}\right).
\]
We claim that $\min_{a\in[A,1]} G(a)=G(1)$. Indeed, we shall prove that \(G'(a) < 0\) for every \(a \in [A,1)\), hence \(G\) is strictly decreasing on $[A,1]$.
Set \(\alpha = \frac{p}{2} > 2\). Then
\[
    f(t) = (1-t)^\alpha \quad \text{and} \quad f'(t) = -\alpha\, (1-t)^{\alpha-1}.
\]
Define \(D = A+a^2\). The derivative is
\[
    G'(a)=\frac{2aA}{D^2}\, \left[f\left(\frac{-A}{a}\right)-f(a) \right] + \frac{A}{D}\, \left[f'(a)+ f'\left(\frac{-A}{a}\right) \right].
\]
Since \(A>0\) and \(D>0\), the sign of \(G'(a)\) equals the sign of
\[
    K(a):=\frac{2a}{D}\, \left[f\left(\frac{-A}{a}\right)-f(a) \right] + f'(a) + f'\left(\frac{-A}{a}\right).
\]
Introduce the variables
\[
    x=1-a\quad \text{and} \quad y=1+\frac{A}{a}.
\]
Then \(0 < x \le 1-A\) and \(y \ge 1+A\). Moreover,
\[
    a=1-x,\quad \frac{A}{a}=y-1,\quad  \text{and} \quad A=a(y-1)=(1-x)(y-1),
\]
and thus we have
\[
    D=A+a^2=(1-x)(y-1)+(1-x)^2=(1-x)(y-x).
\]
Hence
\[
    \frac{2a}{D}=\frac{2(1-x)}{(1-x)(y-x)}=\frac{2}{y-x}.
\]
We also have
\[
    f\!\left(-\frac{A}{a}\right)=\left(1+ \frac{A}{a} \right)^{\alpha} = y^{\alpha}, 
    \quad \ f(a)=(1-a)^\alpha=x^\alpha,
\]
and 
\[
f'\left( \frac{-A}{a} \right)=-\alpha \, \left(1+ \frac{A}{a}\right)^{\alpha-1} =-\alpha y^{\alpha-1}, \quad \ 
f'(a)=-\alpha \, (1-a)^{\alpha-1}=-\alpha x^{\alpha-1}.
\]
Therefore,
\[
K(a)=\frac{2}{y-x}(y^\alpha-x^\alpha)-\alpha\bigl(x^{\alpha-1}+y^{\alpha-1}\bigr).
\]
Multiplying by the positive quantity \(y-x\) gives
\[
(y-x)K(a)=2(y^\alpha-x^\alpha)-\alpha(y-x)\bigl(x^{\alpha-1}+y^{\alpha-1}\bigr).
\]
Thus \(K(a)<0\) if and only if
\begin{equation}\label{stella}
2(y^\alpha-x^\alpha)<\alpha(y-x)\bigl(x^{\alpha-1}+y^{\alpha-1}\bigr). 
\end{equation}
Set \(t=\frac{x}{y} \, (0<t<1)\). Then \(y-x=y(1-t)\) and after division by \(y^\alpha\), the inequality \eqref{stella} becomes
\begin{equation}\label{stellastella}
2(1-t^\alpha)<\alpha(1-t)\bigl(t^{\alpha-1}+1\bigr). 
\end{equation}
Define \(\psi(t)=\alpha(1-t)\bigl(1+t^{\alpha-1}\bigr)-2(1-t^\alpha)\) for \(t\in(0,1]\). We shall prove \(\psi(t)>0\) for all \(t\in(0,1)\). Since \(\psi(1)=0\), it suffices to show that \(\psi\) is strictly decreasing on \((0,1)\). Compute
\[
\psi'(t)=\alpha\Bigl[- \bigl(1+t^{\alpha-1}\bigr)+(1-t)(\alpha-1)t^{\alpha-2}\Bigr]+2\alpha t^{\alpha-1}
        =\alpha\Bigl[-1+t^{\alpha-1}+(\alpha-1)(1-t)t^{\alpha-2}\Bigr].
\]
We rewrite the bracket as
\[
-1+t^{\alpha-2}\bigl[t+(\alpha-1)(1-t)\bigr]
   =-1+t^{\alpha-2}\bigl[(\alpha-1)-(\alpha-2)t\bigr].
\]
Set \(g(t)=t^{\alpha-2}\bigl((\alpha-1)-(\alpha-2)t\bigr)\). Then \(\psi'(t)=\alpha\bigl(-1+g(t)\bigr)\). The derivative of \(g\) is
\begin{eqnarray*}
g'(t)&=&(\alpha-2)t^{\alpha-3}\bigl((\alpha-1)-(\alpha-2)t\bigr)+t^{\alpha-2}\bigl(-(\alpha-2)\bigr)\\
      &=&(\alpha-2)t^{\alpha-3}\bigl[(\alpha-1)-(\alpha-2)t-t\bigr]\\
      &=&(\alpha-2)(\alpha-1)t^{\alpha-3}(1-t).
\end{eqnarray*}
For \(\alpha>2\) we have \((\alpha-2)(\alpha-1)>0\), and for \(t\in(0,1)\) also \(t^{\alpha-3}>0\) and \(1-t>0\); hence \(g'(t)>0\). Thus \(g\) is strictly increasing on \((0,1]\). Since \(g(1)=1\), it follows that \(g(t)<1\) for every \(t\in(0,1)\). Consequently \(\psi'(t)=\alpha(-1+g(t))<0\) for all \(t\in(0,1)\), so \(\psi\) is strictly decreasing. Because \(\psi(1)=0\), we obtain \(\psi(t)>0\) for every \(t\in(0,1)\), which is exactly inequality \eqref{stellastella}.
Therefore \eqref{stella} holds, hence \(K(a)<0\) and finally \(G'(a)<0\) for every \(a\in[A,1)\). Thus \(G\) is strictly decreasing on \([A,1]\), and its minimum is attained at the right endpoint \(a=1\). It follows that
\[
F(A)=\min_{a\in[A,1]}G(a)=G(1)=(1+A)^{\alpha-1},
\]
and the minimizer corresponds to $a=1$, $b=-A$, and $p= \frac{A}{1+A}$, i.e.,
\begin{equation}\label{eq:optimal-nu}
\nu_A=\frac{A}{1+A}\,\delta_1+\frac{1}{1+A}\,\delta_{-A}.
\end{equation}
From \eqref{eq:Jp-nu} and the definition of $F(A)$, we get
\begin{equation}\label{oibo}
J_p(\mu)=2^{\frac{p}{2}}\, \int_{-1}^{1} f(t) \,d\nu(t) \ge 2^{\frac{p}{2}}\, F(A) = 2^{\frac{p}{2}}(1+A)^{\frac{p}{2}-1},
\end{equation}
for all measures $\mu\in \mathcal{P}_0(\mathbb{S}^{n-1})$.

Letting now $M$ be such that
$M_{ij} = \int_{\mathS^{n-1}} v_i v_j \, d\mu(v)$,
%Since \(\int v\,d\mu(v)=0\) and \(\|v\|=1\), we have \(\operatorname{tr}(M_2) = \int \|v\|^2 d\mu = 1\), and 
%\[
%A = \iint (v\cdot w)^2 d\mu(v)d\mu(w)
%       = \sum_{i,j} \Bigl(\int v_i v_j d\mu\Bigr)\Bigl(\int w_i w_j d\mu\Bigr)
%       = \sum_{i,j} (M_2)_{ij}^2 = \operatorname{tr}(M_2^2).
%\]
by \eqref{emmedue} we have
\[
A = \operatorname{tr}(M^2) \ge \frac{1}{n},
\]
with the equality if and only if \(M = \frac{1}{n}I_n\) (i.e. \(\mu\) is isotropic). 
Since $A\mapsto(1+A)^{\frac{p}{2}-1}$ is increasing for $p>4$, from \eqref{oibo} we then obtain
\begin{equation}\label{eq:lower-bound}
J_p(\mu)\ge 2^{\frac{p}{2}}\Bigl(1+\frac{1}{n}\Bigr)^{\frac{p}{2}-1}.
\end{equation}
The equality forces $A= \frac{1}{n}$ and $\nu=\nu_{\frac{1}{n}} = \frac{1}{n+1}\,\delta_1+\frac{n}{n+1}\,\delta_{-\frac{1}{n}}$.
Thus for $(\mu\otimes\mu)$-almost every $(v,w) \in \mathS^{n-1} \times \mathS^{n-1}$, $v \cdot w\in\{1,-\frac{1}{n}\}$. 
Hence the support of $\mu$ consists of unit vectors with pairwise inner products $-\frac{1}{n}$, which are the vertices of a regular $n$-simplex. The barycenter condition $\int_{\mathS^{n-1}} v\,d\mu(v)=0$ forces $\mu$ to be uniform on these $n+1$ vertices, i.e., $\mu=\mu_{\mathrm{sim}}$.

Any other minimizer $\mu$ induces the same measure $\nu$, hence 
for \(\mu\otimes\mu\)-almost every \((v,w)\) we have \(v\cdot w \in \{1, -\frac{1}{n}\}\),
so that
\[
    |v-w|^2 = 2-2v\cdot w \in \left\{0,\; 2\, \left(1+\frac{1}{n}\right) \right\}.
\]
Thus, distinct points in the support of \(\mu\) are at constant distance \(\sqrt{2(1+\frac{1}{n})}\); equivalently, they form a set of unit vectors with pairwise inner product \(-\frac{1}{n}\). Such a configuration is, up to rotation, exactly the set of vertices of a regular \(n\)-simplex. Moreover, the barycenter constraint forces the measure to be uniform on those vertices, so that the minimum is uniquely attained by $\mu_{\mathrm{sim}}$.
\end{proof}

\section{Optimal sets for \texorpdfstring{$\calE_p$}{Ep}}
Building on the results of the previous section, we now 
characterize minimizers and maximizers for the energy $\calE_p$ under perimeter or volume constraint.

\subsection{Perimeter constraint}\label{sec:perimeter}
Given $P_0>0$, we consider the problems
\begin{align}
&\min\{\calE_p(\Omega) :\, \Omega\subset\mathR^n,\, \Per(\Omega)=P_0\}, \label{prob:per-min}\\
&\max\{\calE_p(\Omega) :\, \Omega\subset\mathR^n,\, \Per(\Omega)=P_0\}. \label{prob:per-max}
\end{align}

%\begin{theorem}\label{thm:exist-per}
%For every $p>0$ and every $P_0>0$, problems \eqref{prob:per-min} and \eqref{prob:per-max} admit solutions.
%\end{theorem}
%\begin{proof}
%For minimization, let $\{\Omega_k\}$ be a minimizing sequence with $\Per(\Omega_k)=P_0$.
%By the compactness theorem for sets of finite perimeter, after extracting a subsequence we may assume $\Omega_k\to\Omega$ in $L^1_{\rm loc}$ and $\Per(\Omega) \le \liminf_{k \to +\infty} \Per(\Omega_k)=P_0$. The perimeter may drop in the limit, but we can always rescale: if $\Per(\Omega)<P_0$, let $\widetilde\Omega=\lambda\Omega$ with $\lambda=(P_0/\Per(\Omega))^{1/(n-1)}$. Then $\Per(\widetilde\Omega)=P_0$ and by the scaling property $\calE_p(\widetilde\Omega)=\lambda^{2(n-1)}\calE_p(\Omega)$. A lower semicontinuity argument shows that $\widetilde\Omega$ is a minimizer.

%For maximization, note that $\calE_p(\Omega)\le 2^p \Per(\Omega)^2$ because $|v-w|^p\le 2^p$. Thus the maximum is finite. Let $\{\Omega_k\}$ be a maximizing sequence. Again by compactness, we can assume $\Omega_k\to\Omega$ in $L^1_{\rm loc}$ and $\Per(\Omega)\le P_0$. If $\Per(\Omega)<P_0$, rescaling as above would increase $\calE_p$ (since $\lambda>1$ and $p>0$), contradicting maximality. Hence $\Per(\Omega)=P_0$ and by lower semicontinuity $\calE_p(\Omega)\ge \limsup \calE_p(\Omega_k)$, so $\Omega$ is a maximizer.
%\end{proof}

We partially characterize the optimal sets for different ranges of $p$.

\begin{theorem}\label{thm:p<2-per}
For $0<p<2$:
\begin{enumerate}
\item The infimum of \eqref{prob:per-min} is not attained.
A minimizing sequence is given by thin cylinders with the heights tending to zero.
\item The ball of perimeter $P_0$ is a maximizer of \eqref{prob:per-max}, and it is the unique maximizer among convex sets.
\end{enumerate}
\end{theorem}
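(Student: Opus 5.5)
The whole argument runs through the identity $\calE_p(\Om)=\Per(\Om)^2J_p(\mu_\Om)$ of Proposition \ref{prop:measure-reformulation}. With the perimeter fixed at $P_0$, minimizing or maximizing $\calE_p$ is the same as minimizing or maximizing $J_p$ over the measures $\mu_\Om$. Proposition \ref{thm:p<2-min} then settles both parts, once we check which extremal measures can actually be realized by sets.

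\emph{Part (1).} By Proposition \ref{thm:p<2-min}, every admissible $\Om$ satisfies $\calE_p(\Om)\ge 2^{p-1}P_0^2$. Equality would force $\mu_\Om=\frac12(\delta_v+\delta_{-v})$. That measure is supported on a closed hemisphere (indeed on a great subsphere), so, as noted in Section \ref{sec:preliminaries} and in the remark after Proposition \ref{thm:p<2-min}, it is not $\mu_\Om$ for any set of finite perimeter and finite volume. Hence the infimum is not attained.

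To show the infimum equals $2^{p-1}P_0^2$, take the cylinders $\Om_h=B'_r\times(0,h)$, where $B'_r\subset\R^{n-1}$ is a ball, and rescale so that $\Per(\Om_h)=P_0$. As $h\to0$ the lateral area $h\,\calH^{n-2}(\partial B'_r)$ becomes negligible compared with the two faces of area $\calH^{n-1}(B'_r)$ each. Therefore $\mu_{\Om_h}$ converges weak-star to $\frac12(\delta_{e_n}+\delta_{-e_n})$. Since $J_p$ is weak-star continuous, $\calE_p(\Om_h)=P_0^2J_p(\mu_{\Om_h})\to 2^{p-1}P_0^2$. The only estimate needed is that the lateral mass $h\calH^{n-2}(\partial B'_r)/\Per(\Om_h)$ tends to zero, which is immediate.

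\emph{Part (2).} For a ball $B$ we have $\mu_B=\sigma_{n-1}$, so $\calE_p(B)=P_0^2J_p(\sigma_{n-1})=C(n,p)P_0^2$. This is the largest possible value by Theorem \ref{thm:mainineq} and Proposition \ref{thm:p<2-min}, so the ball is a maximizer. For uniqueness among convex sets, let $K$ be a convex maximizer. Its measure $\mu_K$ maximizes $J_p$, and the strict concavity in the proof of Proposition \ref{thm:p<2-min} (Lemma \ref{lm:bjork} gives $J_p(\mu_1-\mu_2)<0$ whenever $\mu_1\ne\mu_2$) yields a unique maximizer, namely $\sigma_{n-1}$. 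Thus $\mu_K=\sigma_{n-1}=\mu_B$. The uniqueness part of the Minkowski Theorem \ref{teomink}, applied to $K$ and $B$ with the same perimeter, then gives that $K$ is a translate of $B$.

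The only delicate point is this last step: we must make sure the uniqueness of the maximizing measure was really established, i.e.\ that strict concavity follows from the strict inequality in Lemma \ref{lm:bjork}. Once that holds, matching the surface area measures and invoking Minkowski uniqueness is routine. Non-convex maximizers can also exist: any $\Om$ with $\mu_\Om=\sigma_{n-1}$ is one. This is why uniqueness is claimed only among convex sets.
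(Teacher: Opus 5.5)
Your proposal is correct and follows essentially the paper's own argument. You reduce to $J_p$ via $\calE_p(\Om)=\Per(\Om)^2J_p(\mu_\Om)$, invoke Proposition \ref{thm:p<2-min}, exclude the antipodal measure, and approximate it by thin cylinders; you use round cylinders with weak-star continuity of $J_p$ where the paper computes directly on flat boxes. Your explicit use of the strict inequality in Lemma \ref{lm:bjork} and of Minkowski uniqueness for the convex case simply spells out what the paper leaves implicit.
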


\begin{proof}
We first prove $(1)$. From Proposition \ref{thm:p<2-min}, we have
\begin{equation*}
    \inf \{\mathrm{E}_p(\Omega) : \Omega \subset \mathR^n ,\, |\Omega| < +\infty, \, \Per(\Omega) = P_0\} \geq P_0^2\, c(n,p) = P_0^2\, 2^{p-1},
\end{equation*}
and $J_p(\mu)=c(n,p)$ is achieved only by measures of the form $\mu = \frac{1}{2}(\delta_{v} + \delta_{-v})$ for some $v \in \mathS^{n-1}$. 
Such measures are not induced by a set of finite perimeter, so the the infimum is not attained. To see this, we construct a minimizing sequence such whose energy converges to $P_0^2\,2^{p-1}$. We set $\Omega_{\varepsilon} := [0,\,\varepsilon]\times[0,\, L_\varepsilon]^{n-1}$ where $L_{\varepsilon} > 0$ satisfying $\lim_{\varepsilon \downarrow 0}L^{n-1}_{\varepsilon} = \frac{P_0}{2}$, so that $\Per(\Omega_{\varepsilon}) = P_0$. Then, an easy computation gives 
\begin{align*}
    \mathrm{E}_p(\Omega_{\varepsilon}) &= 2\, \iint_{F^1_{\varepsilon} \times F^2_{\varepsilon}} 2^{\frac{p}{2}}\, \left( 1 - \nu_{\Omega_{\varepsilon}}(x) \cdot \nu_{\Omega_{\varepsilon}}(y) \right)^{\frac{p}{2}}\,d\calH^{n-1}\,d\calH^{n-1} + \mathrm{O}(\varepsilon) \nonumber\\
    &=  2^{p+1}\, \calH^{n-1}(F^1_{\varepsilon}) \, \calH^{n-1}(F^2_{\varepsilon})  + \mathrm{O}(\varepsilon),
\end{align*}
where $F^1_{\varepsilon} \coloneqq \{0\} \times [0,\,L_{\varepsilon}]^{n-1}$ and $F^2_{\varepsilon} \coloneqq \{\varepsilon\} \times [0,\,L_{\varepsilon}]^{n-1}$.
%From Proposition~\ref{thm:p<2-min}, the minimum of $J_p$ is $2^{p-1}$, attained only by $\mu_{\rm slab}$. A set inducing $\mu_{\rm slab}$ must have a boundary consisting of two parallel hyperplanes, i.e., a slab. For bounded sets, we can approximate it by $\Omega_\varepsilon=[0,\varepsilon]\times[0,L_\varepsilon]^{n-1}$ with appropriate $L_\varepsilon$ having a perimeter $P_0$. Then 
Letting $\varepsilon \downarrow 0$, we have
\begin{equation*}
    \lim_{\varepsilon \downarrow 0} \calE_p(\Omega_\varepsilon)   = 2^{p-1} \, P_0^2.
\end{equation*}
This completes the proof of the first claim.

Regarding $(2)$, by Proposition \ref{thm:p<2-min}, the maximum of $J_p(\mu)$ is uniquely attained at $J_p(\sigma_{n-1})$, which corresponds to a ball of perimeter $P_0$. 
\end{proof}

\begin{remark}
By Proposition~\ref{prop:p=2} we have $\calE_2(\Omega)=2\Per(\Omega)^2$,
thus every set with perimeter $P_0$ has the same energy. 
%there is no nontrivial optimization.
\end{remark}

\begin{theorem}\label{thm:2<p<4-per}
For $2<p<4$:
\begin{enumerate}
\item The ball of perimeter $P_0$ is a minimizer of \eqref{prob:per-min},
and it is the unique minimizer among convex sets.
\item The supremum in \eqref{prob:per-max} is not attained.
A maximizing sequence is given by thin cylinders with the heights tending to zero.
\end{enumerate}
\end{theorem}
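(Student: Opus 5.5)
The proof will be the mirror image of the proof of Theorem \ref{thm:p<2-per}, with the roles of minimum and maximum exchanged. Everything rests on the reformulation of Proposition \ref{prop:measure-reformulation}: for every admissible $\Om$ with $\Per(\Om)=P_0$ we have $\calE_p(\Om)=P_0^2\,J_p(\mu_\Om)$ with $\mu_\Om\in\mathcal P_0(\mathS^{n-1})$. The problems \eqref{prob:per-min} and \eqref{prob:per-max} therefore reduce to minimizing or maximizing $J_p$ over the measures $\mu_\Om$. These measures are exactly the elements of $\mathcal P_0(\mathS^{n-1})$ not supported in a closed hemisphere, by Theorem \ref{teomink}.

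For (1), Proposition \ref{thm:2<p<4} gives $J_p(\mu)\ge J_p(\sigma_{n-1})$ for all $\mu\in\mathcal P_0(\mathS^{n-1})$, with equality only for $\mu=\sigma_{n-1}$. The ball $B$ with $\Per(B)=P_0$ satisfies $\mu_B=\sigma_{n-1}$, since the Gauss map of a sphere pushes normalized area forward to the uniform measure. Hence $\calE_p(\Om)\ge P_0^2J_p(\sigma_{n-1})=\calE_p(B)$, so $B$ is a minimizer.

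For uniqueness among convex sets, take a convex minimizer $C$. Then $J_p(\mu_C)=J_p(\sigma_{n-1})$, so strict convexity forces $\mu_C=\sigma_{n-1}=\mu_B$. The uniqueness part of Theorem \ref{teomink}, applied after the rescaling in Remark \ref{rmk:applicationMinkowski}, gives that $C$ is a translate of $B$.

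The point needing some care is that uniqueness can only be claimed among convex sets. A nonconvex $\Om$ can have $\mu_\Om=\sigma_{n-1}$; for instance, a disjoint union of two balls has the uniform Gauss measure. So no more than convex uniqueness should be asserted. Proposition \ref{provol} only says that the ball has the largest volume among such sets, and I would mention this as a remark.

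For (2), Proposition \ref{max:2<p} gives $J_p(\mu)\le 2^{p-1}$, with equality only for $\mu=\frac12(\delta_v+\delta_{-v})$. As in the remark following Proposition \ref{thm:p<2-min}, such a measure is supported in a closed hemisphere, so it is not $\mu_\Om$ for any $\Om$ with finite volume and perimeter. Hence $\calE_p(\Om)<2^{p-1}P_0^2$ for every admissible $\Om$.

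It remains to show that the supremum equals $2^{p-1}P_0^2$. I will reuse the thin boxes $\Om_\varepsilon=[0,\varepsilon]\times[0,L_\varepsilon]^{n-1}$ from the proof of Theorem \ref{thm:p<2-per}, with $L_\varepsilon$ chosen so that $\Per(\Om_\varepsilon)=P_0$. The lateral faces have total area $O(\varepsilon)$, so $\mu_{\Om_\varepsilon}\rightharpoonup\frac12(\delta_{e_1}+\delta_{-e_1})$ weakly-star. Continuity of $J_p$ then gives $\calE_p(\Om_\varepsilon)\to P_0^2\,2^{p-1}$. Thus $\sup\calE_p=2^{p-1}P_0^2$ and it is not attained.
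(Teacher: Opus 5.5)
Your proposal is correct and follows essentially the same route as the paper. The paper reduces to $J_p$ via $\calE_p(\Om)=P_0^2J_p(\mu_\Om)$, uses that $\sigma_{n-1}$ is the unique minimizer (Proposition~\ref{thm:2<p<4}) and that the antipodal measures are the only maximizers, and then argues ``as in the proof of Theorem~\ref{thm:p<2-per}''. Your weak-star convergence of the thin-box measures combined with continuity of $J_p$, and your explicit use of the uniqueness part of Minkowski's theorem for convex minimizers (with the two-balls example showing uniqueness fails without convexity), simply spell out details the paper leaves implicit.
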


\begin{proof}
By Proposition~\ref{thm:2<p<4} the unique minimizer of $J_p$ is $\sigma_{n-1}$,
and the maximizers are measures of the form $\mu = \frac{1}{2}(\delta_{v} + \delta_{-v})$ for some $v \in \mathS^{n-1}$. 
The thesis then follows as in the proof of Theorem \ref{thm:p<2-per}.
\end{proof}

\begin{theorem}\label{thm:p=4-per}
For $p\ge 4$:
\begin{enumerate}
\item For $p=4$, both the ball and the regular simplex of perimeter $P_0$ are minimizers of \eqref{prob:per-min}. 
\item For $p>4$, the regular simplex of perimeter $P_0$ is a minimizer of \eqref{prob:per-min}, and it is the unique minimizer among convex sets.
\item For $p\ge 4$, the supremum of \eqref{prob:per-max} is not attained.
A maximizing sequence is given by thin cylinders with the heights tending to zero.
\end{enumerate}
\end{theorem}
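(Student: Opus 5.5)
The plan is to reduce everything to the measure problem via Proposition \ref{prop:measure-reformulation}. For any admissible $\Omega$ with $\Per(\Omega)=P_0$ we have $\calE_p(\Omega)=P_0^2 J_p(\mu_\Omega)\ge P_0^2\,c(n,p)$. So a set $\Omega$ with $\Per(\Omega)=P_0$ is a minimizer as soon as $\mu_\Omega$ minimizes $J_p$ over $\mathcal P_0(\mathS^{n-1})$.

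For (1) I will use Proposition \ref{thm:p=4}. It says $\sigma_{n-1}$ and $\mu_{\rm sim}$ both attain $c(n,4)=4+\frac4n$. The ball of perimeter $P_0$ has $\mu_{B}=\sigma_{n-1}$, since its Gauss map pushes the normalized area measure to the uniform measure. The regular simplex $T$ has $n+1$ congruent facets with outer normals $v_0,\dots,v_n$. These normals are unit vectors with equal pairwise inner products: the symmetry group of $T$ acts transitively on pairs of facets. Hence $\mu_T=\frac1{n+1}\sum_i\delta_{v_i}=\mu_{\rm sim}$. Scaling both sets to perimeter $P_0$ gives $\calE_4=P_0^2c(n,4)$, which is the minimum.

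For (2) I argue in the same way using Theorem \ref{thm:main}. The simplex of perimeter $P_0$ realizes $\mu_{\rm sim}$, so it is a minimizer. For uniqueness among convex sets, take a convex minimizer $C$. Then $J_p(\mu_C)=c(n,p)$, so $\mu_C=\mu_{\rm sim}$ up to rotation by the uniqueness in Theorem \ref{thm:main}. The uniqueness part of the Minkowski Theorem \ref{teomink} then forces $C$ to be the regular simplex up to rotations and translations, with perimeter fixed at $P_0$. The only step needing care is that the facet normals of the simplex form exactly the configuration in the definition of $\mu_{\rm sim}$, and the symmetry argument above settles this.

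For (3) I use Proposition \ref{max:2<p}, which applies since $p\ge4>2$. It gives $C(n,p)=2^{p-1}$, attained only by $\frac12(\delta_v+\delta_{-v})$. No set of finite perimeter and finite volume realizes this measure: it is supported on a closed hemisphere, which is excluded, and the same point is made in the remark after Proposition \ref{thm:p<2-min}. Hence for every admissible $\Omega$ we have $J_p(\mu_\Omega)<2^{p-1}$, and the supremum is not attained. For the maximizing sequence I reuse the thin cylinders $\Omega_\varepsilon=[0,\varepsilon]\times[0,L_\varepsilon]^{n-1}$ with $\Per(\Omega_\varepsilon)=P_0$ from the proof of Theorem \ref{thm:p<2-per}. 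Their lateral faces have area $O(\varepsilon)$, so $\mu_{\Omega_\varepsilon}\to\frac12(\delta_{e_1}+\delta_{-e_1})$ weakly-star. By continuity of $J_p$, $\calE_p(\Omega_\varepsilon)\to 2^{p-1}P_0^2$, which is the supremum.
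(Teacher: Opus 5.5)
Your proposal is correct and follows essentially the same route as the paper. You reduce to the measure problem via $\calE_p(\Omega)=P_0^2J_p(\mu_\Omega)$, then invoke Proposition~\ref{thm:p=4} for $p=4$, Theorem~\ref{thm:main} with the uniqueness part of Theorem~\ref{teomink} for $p>4$, and Proposition~\ref{max:2<p} with the thin boxes $\Omega_\varepsilon$ for the maximum, while spelling out points the paper leaves implicit: that the simplex's facet normals realize $\mu_{\rm sim}$, and the weak-star convergence $\mu_{\Omega_\varepsilon}\to\frac12(\delta_{e_1}+\delta_{-e_1})$, used instead of the paper's direct computation.
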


\begin{proof}
By Proposition~\ref{thm:p=4}, any measure satisfying the isotropy condition \eqref{isotropy} minimizes $J_4$. The ball induces $\sigma_{n-1}$ and the simplex induces $\mu_{\rm sim}$, which are both isotropic measures.

By Theorem \ref{thm:main}, the minimum of $J_p$ is attained uniquely by $\mu_{\rm sim}$,
which is induced by the regular simplex.

By Propositions~\ref{thm:p=4} and \ref{max:2<p}, the maximum of $J_p$ is attained by measures of the form $\mu = \frac{1}{2}(\delta_{v} + \delta_{-v})$ for some $v \in \mathS^{n-1}$, 
and the thesis follows as in the proof of Theorem \ref{thm:p<2-per}.
\end{proof}

%\begin{remark}
%    Thanks to Minkowski's inequality in \cite[Theorem 7.2.1]{S14}, we observe that the following minimization problem
%    \begin{equation*}
%        \min\{ \mathrm{E}_p(C) : \, \text{$C \subset \mathR^n$ is convex with $\Per(C) = P_0$} \}
%    \end{equation*}
%    has a unique solution for any $p>0$ with $p \neq 2,\,4$. Indeed, 
%\end{remark}

\subsection{Volume Constraint}\label{sec:volume}
Given $V_0>0$, we now consider the minimum problem
\begin{equation}\label{prob:vol-min}
\min\{ \mathrm{E}_p(\Omega) :\ \Omega\subset\mathR^n,\, |\Omega|=V_0\}. 
\end{equation}

\begin{theorem}\label{thm:exist-vol}
For every $p>0$ problem \eqref{prob:vol-min} admits a minimizer,
which is bounded and convex.
\end{theorem}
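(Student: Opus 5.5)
Two steps: first reduce the problem to convex sets, then minimize over convex bodies using the Blaschke selection theorem.

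\emph{Reduction to convex sets.} Let $\Om$ be admissible, i.e.\ $|\Om|=V_0$ and finite perimeter (otherwise $\mathrm{E}_p(\Om)=+\infty$ or it is not admissible). By Remark~\ref{rmk:applicationMinkowski} there is a bounded convex $C$ with $\mu_C=\mu_\Om$, $\Per(C)=\Per(\Om)$ and $\mathrm{E}_p(C)=\mathrm{E}_p(\Om)$. Proposition~\ref{provol} gives $|C|\ge|\Om|=V_0$; it is stated for bounded $\Om$, but the mixed-volume identity and Wulff's inequality only need finite measure and perimeter. Set $\lambda=(V_0/|C|)^{1/n}\le 1$. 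Then $|\lambda C|=V_0$ and, by scaling,
\[
\mathrm{E}_p(\lambda C)=\lambda^{2n-2}\mathrm{E}_p(C)\le \mathrm{E}_p(\Om).
\]
So the infimum in \eqref{prob:vol-min} equals the infimum over bounded convex bodies of volume $V_0$.

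\emph{Minimization over convex bodies.} Take a minimizing sequence $C_k$ of convex bodies with $|C_k|=V_0$, translated so that their barycenters are at the origin. Since $\mathrm{E}_p(C_k)=\Per(C_k)^2J_p(\mu_{C_k})$, Proposition~\ref{prop:basic} gives $\mathrm{E}_p(C_k)\ge c(n,p)\Per(C_k)^2$ with $c(n,p)>0$. Hence $\Per(C_k)$ is bounded.

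For convex bodies, bounded perimeter and a fixed volume bound the diameter. Indeed, if $C$ contains a segment of length $d$, then $|C|\le c_n\,d^{-1/(n-2)}\Per(C)^{(n-1)/(n-2)}$ for $n\ge3$ (and $\Per\ge 2d$ for $n=2$). A cleaner route is $|C|\le \Per(C)\,w(C)/2$, where $w(C)$ is the minimal width, combined with the elementary fact that $\Per(C)\gtrsim d\,w^{n-2}$ for a convex body of diameter $d$ and minimal width $w$. Both give $w_k\gtrsim V_0/P$, and then $d_k\lesssim P/w_k^{n-2}$, so the diameters are bounded.

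With the $C_k$ contained in a fixed ball, the Blaschke selection theorem gives a subsequence converging in Hausdorff distance to a compact convex $C$. Volume is continuous under this convergence, so $|C|=V_0>0$ and $C$ is a convex body. Moreover $\mu_{C_k}\rightharpoonup\mu_C$ weakly-star (Theorem~\ref{teominkconv}, or weak continuity of surface area measures) and $\Per(C_k)\to\Per(C)$. Since $J_p$ is weakly-star continuous,
\[
\mathrm{E}_p(C)=\lim_k \Per(C_k)^2J_p(\mu_{C_k})=\lim_k \mathrm{E}_p(C_k),
\]
so $C$ is a bounded convex minimizer.

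\emph{Main obstacle.} The delicate step is the uniform diameter bound, which rules out needle-like or flattening minimizing sequences. Flattening is already handled: a flat body has volume going to zero while the perimeter stays bounded, contradicting $|C_k|=V_0$. I would prove the diameter bound through the width estimate above. Passing to the limit in $\Per$ and $\mu$ is then standard.
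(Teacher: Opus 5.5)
Your proposal is correct and follows the paper's proof: you reduce to convex bodies via Remark~\ref{rmk:applicationMinkowski}, Proposition~\ref{provol} and rescaling, bound the perimeter using $c(n,p)>0$, bound the diameter, and pass to the limit using continuity of perimeter and of the surface area measures. The differences are minor. You prove the diameter bound directly, via $|C|\le \Per(C)\,w/2$ and $\Per(C)\gtrsim d\,w^{n-2}$, where the paper cites \cite{GWW87}; you use Blaschke selection in place of $L^1$ compactness; and you rightly note that Proposition~\ref{provol} must be applied to possibly unbounded sets of finite volume, which the paper's argument uses only implicitly.
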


\begin{proof}
Let $\{\Omega_k\}$ be a minimizing sequence for problem \eqref{prob:vol-min}.
From Remark \ref{rmk:applicationMinkowski} and Proposition \ref{provol}), we can assume that $\Omega_k$ are convex sets. Indeed, if $C_k$ are bounded convex sets such that 
$\Per(C_k)=\Per(\Om_k)$, $|C_k|\ge |\Om_k|=V_0$ and $\mathrm{E}_p(C_k)=\mathrm{E}_p(\Om_k)$,
letting $\widetilde \Om_k= (V_0/|C_k|)^{\frac 1n}C_k$ we have $\mathrm{E}_p(\widetilde \Om_k)\le\mathrm{E}_p(\Om_k)$, with equality if and only if $\Om_k$ is convex.

Letting $B^{V_0}$ be the ball of volume $V_0$, 
by Proposition \ref{prop:basic} we have
\[
\Per(\Om_k)^2\le \frac{\mathrm{E}_p(\Om_k)}{c(n,p)}\le \frac{\mathrm{E}_p(B^{V_0})}{\min(2^{\frac{p}{2}},2^{p-1})},
\]
that is, the perimeter of the sets $\Om_k$ is uniformly bounded.
As a consequence, the diameter of $\Om_k$ is also uniformly bounded 
(see for instance \cite{GWW87}),
so that, up to a subsequence, $\chi_{\Om_k} \to \chi_{\Om}$ as $k \to +\infty$ in $L^1$, where $\Om$ is a bounded convex set
of volume $V_0$. Since there holds $\Per(\Om_k)\to \Per(\Om)$, by Theorem \ref{teominkconv}
it follows that $\Om$ is a solution of problem \eqref{prob:vol-min}.
\end{proof}

Since $J_2(\mu)=2$ and $\mathrm{E}_2(\Omega)=2\Per(\Omega)^2$, for $p=2$ the minimization of $\mathrm{E}_p$
reduces to minimizing perimeter at fixed volume, which gives the ball  $B^{V_0}$
as unique minimizer. More generally, we can show that the ball is the unique minimizer also for $2<p\le 4$.

\begin{theorem}\label{thm:2<p<4-vol}
For $2\le p\le 4$ the ball is the unique minimizer of $\mathrm{E}_p$.
\end{theorem}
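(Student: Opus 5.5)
The argument factorizes the energy through Proposition~\ref{prop:measure-reformulation}: for any admissible $\Om$ with $|\Om|=V_0$ we have $\mathrm{E}_p(\Om)=\Per(\Om)^2 J_p(\mu_\Om)$. Both factors are separately minimized by the ball $B^{V_0}$ when $2\le p\le 4$. The isoperimetric inequality gives $\Per(\Om)\ge \Per(B^{V_0})$, with equality only for balls (up to null sets and translations). The spectral results of Section~\ref{sec:constants} give $J_p(\mu_\Om)\ge c(n,p)=J_p(\sigma_{n-1})$: for $p=2$ this is Proposition~\ref{prop:p=2}, for $2<p<4$ Proposition~\ref{thm:2<p<4}, and for $p=4$ Proposition~\ref{thm:p=4}, since $\sigma_{n-1}$ is isotropic and hence a minimizer. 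Because $\mu_{B^{V_0}}=\sigma_{n-1}$, multiplying the two inequalities yields
\[
\mathrm{E}_p(\Om)\ \ge\ \Per(B^{V_0})^2\,J_p(\sigma_{n-1})=\mathrm{E}_p(B^{V_0}),
\]
so the ball is a minimizer.

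For uniqueness, suppose equality holds. Both factors are nonnegative and $J_p\ge c(n,p)>0$ by Proposition~\ref{prop:basic}, so equality in the product forces equality in each factor; in particular $\Per(\Om)=\Per(B^{V_0})$. The equality case of the isoperimetric inequality then gives $\Om=B^{V_0}$ up to translations and negligible sets. This step does not depend on $p$. It is the point that needs care at $p=4$, where $J_4$ has many minimizers (for instance $\mu_{\rm sim}$) and the measure factor alone cannot force $\mu_\Om=\sigma_{n-1}$. For $2<p<4$ uniqueness also follows from the strict convexity in Proposition~\ref{thm:2<p<4}, but the perimeter argument covers all cases uniformly.

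No real obstacle is expected: the proof is the observation that both factors are minimized by the same set. The only points to check are that $\mu_B=\sigma_{n-1}$, which holds because the Gauss map of a sphere pushes normalized surface measure to the uniform measure, and that the isoperimetric equality case is available for general sets of finite perimeter and finite volume. Theorem~\ref{thm:exist-vol} is not needed, since the inequality is proved directly for every admissible set.
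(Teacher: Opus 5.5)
Your proposal is correct and takes essentially the paper's route: you factor $\mathrm{E}_p(\Omega)=\Per(\Omega)^2 J_p(\mu_\Omega)$, bound $J_p(\mu_\Omega)\ge J_p(\sigma_{n-1})$ using the results of Section~\ref{sec:constants}, and finish with the isoperimetric inequality. Your explicit treatment of $p=2$ via Proposition~\ref{prop:p=2}, and of uniqueness via the isoperimetric equality case together with $J_p(\sigma_{n-1})>0$, spells out what the paper's one-line proof leaves implicit.
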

\begin{proof}
By Propositions \ref{thm:2<p<4} and \ref{thm:p=4}, in this range of $p$
the measure $\sigma_{n-1}$ is a minimizer of $J_p$, hence
\[
\mathrm{E}_p(\Om)\ge \Per(\Om)^2 J_p(\sigma_{n-1})\ge \Per(B^{V_0})^2 J_p(\sigma_{n-1}) 
= \mathrm{E}_p(B^{V_0}),
\]
for every finite perimeter set $\Om$ with volume $V_0$, 
and the equality holds if and only 
if $\Om$ is a ball of volume $V_0$.
\end{proof}

\begin{remark}
    In the case $0<p<2$ the ball minimizes the perimeter but the antipodal measure minimizes $J_p$ (by Proposition \ref{thm:p<2-min}), so we expect that the minimizer is a ball for $p$ close enough to $2$, while it degenerates to a hyperplane of multiplicity $2$ as $p\to 0$.

    In the case $p>4$ the ball minimizes the perimeter but the uniform measure on the regular simplex minimizes $J_p$ (by Theorem \ref{thm:main}), so we expect that the minimizer is a ball for $p$ close enough to $4$, and a regular simplex for $p$ large enough.
\end{remark}

\subsection{Closed curves of fixed length}\label{seccurves}

In this section, we consider the minimization and maximization of a geometric oscillation energy for closed rectifiable curves in \(\mathbb{R}^n\). We set $\calA([0,L]; \,\mathR^n)$ be the collection of rectifiable closed curves of length $L>0$, parametrized by arc-length. For $\gamma \in \calA([0,L]; \mathR^n)$, we denote the unit tangent vector of $\gamma$ at \(\gamma(s)\) by \(\tau(s) \in \mathS^{n-1}\), and we define
\[
\mathrm{E}_p(\gamma) := \iint_{[0,L]^2} |\tau(s) - \tau(t)|^p \, ds \, dt
\]
for $p>0$.
%Note that this functional is scale-invariant: if we dilate the curve, the length changes but the energy scales as \(\mathrm{E}_p(\lambda \gamma) = \lambda^2 \mathrm{E}_p(\gamma)\), while the length becomes \(\lambda L\). Therefore, it is natural to fix the length. 
We consider the problems
\begin{equation}
\min \{ \mathrm{E}_p(\gamma)  :\, \gamma \in \calA([0,L]; \mathR^n)  \} \label{prob:curve-min} 
\end{equation}
and
\begin{equation}
\max \{ \mathrm{E}_p(\gamma) : \, \gamma \in \calA([0,L]; \mathR^n)  \}. \label{prob:curve-max}
\end{equation}
As in the case of hypersurfaces, we can relate this problem to a variational problem on probability measures on the sphere. Define the push-forward measure
\[
\mu_\gamma = \frac{1}{L} \, \tau_\# (\calH^1 \lfloor_{[0,L]}) \in \mathcal{P}_0(S^{n-1}),
\]
and we have
\[
\mathrm{E}_p(\gamma) = L^2 J_p(\mu_\gamma).
\]
%where \(J_p(\mu) = \iint_{S^{n-1} \times S^{n-1}} |v-w|^p \, d\mu(v) \, d\mu(w)\) as before. Moreover, because the curve is closed, we have
%\[
%\int_0^L \tau(s) \, ds = 0,
%\]
%which implies that the barycenter of \(\mu_\gamma\) is zero: \(\int_{S^{n-1}} v \, d\mu_\gamma(v) = 0\). Hence, \(\mu_\gamma \in \mathcal{P}_0(S^{n-1})\).
% \fumihiko{I couldn't find a reference in the original file.}
Conversely, given any \(\mu \in \mathcal{P}_0(\mathS^{n-1})\), there exists a rectifiable curve \(\gamma\) of length \(L\) such that \(\mu_\gamma = \mu\). Indeed, by the Isomorphism theorem for measures (see \cite[Theorem 17.41]{K95}), there exists a measurable map \(u : [0,L] \to \mathS^{n-1}\) such that \(\mu = \frac{1}{L} u_\# \mathcal{L}^1 \lfloor_{[0,L]}\) and \(\int_0^L u(s)\, ds = 0\). Then one can define \(\gamma(s) = \int_0^s u(t)\, dt\), which yields a closed rectifiable curve with tangent field \(u\). Therefore, problems \eqref{prob:curve-min} and \eqref{prob:curve-max} are equivalent to
\[
\min_{\mu \in \mathcal{P}_0(\mathS^{n-1})} L^2 J_p(\mu) \quad \text{and} \quad \max_{\mu \in \mathcal{P}_0(\mathS^{n-1})} L^2 J_p(\mu),
\]
respectively. 
%Since the constant \(L^2\) is irrelevant for the optimization, we are led to study the minima and maxima of \(J_p\) over \(\mathcal{P}_0(S^{n-1})\).

%The results obtained in Section \ref{sec:constants} for the constants \(c(n,p)\) and \(C(n,p)\) provide a complete characterization of these extrema for different ranges of \(p\). We now translate those results into geometric statements about curves.

\begin{theorem}[Minimizing curves]\label{thm:curve-min}
Let \(L > 0\) be fixed. The minimum value of \(\mathrm{E}_p\) among closed rectifiable curves of length \(L\) is \(L^2 c(n,p)\), where \(c(n,p)\) is given by Theorem \ref{thm:p<2-min}, Proposition \ref{prop:p=2}, Theorem \ref{thm:2<p<4}, Theorem \ref{thm:p=4}, and Theorem \ref{thm:main} for the respective ranges of \(p\). In particular,
\begin{enumerate}
\item For \(0 < p < 2\), the minimum is attained by curves that are multiply covered segments of length \(L\) (where the length is counted with multiplicity). The minimum value is \(2^{p-1} L^2\).
\item For \(p=2\), every curve of length \(L\) has energy \(2L^2\) and is a minimizer.
\item For \(2 < p < 4\) minimizers are curves whose tangent measure is the uniform measure \(\sigma_{n-1}\). For $n=2$ the circle of length \(L\) is a minimizer, and it is the unique minimizer among convex curves.
\item For \(p=4\), there are infinitely many minimizers, given by curves whose tangent measure satisfies the isotropy condition \eqref{isotropy}. For $n=2$,
among them there is the circle and all the regular polygons of perimeter \(L\). 
\item For \(p>4\), minimizers are closed curves whose tangent measure is 
\(\mu_{\mathrm{sim}}\). For \(n=2\) equilateral triangles of perimeter \(L\) are minimizers,
and they are the unique minimizers among convex curves.
\end{enumerate}
\end{theorem}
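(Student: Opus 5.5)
The plan is to reduce everything to the measure problem through the correspondence $\mathrm{E}_p(\gamma)=L^2J_p(\mu_\gamma)$. We already know that every $\mu\in\mathcal{P}_0(\mathS^{n-1})$ is realized as $\mu_\gamma$ for some closed rectifiable curve of length $L$ (isomorphism theorem plus $\gamma(s)=\int_0^s u$). Hence
\[
\min_\gamma \mathrm{E}_p(\gamma)=L^2 c(n,p),
\]
and the minimizing curves are exactly those whose tangent measure minimizes $J_p$. Unlike the hypersurface case, no Minkowski obstruction appears: the infimum is always attained, since degenerate measures such as $\frac12(\delta_v+\delta_{-v})$ are realized by curves. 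It then remains to go through the ranges of $p$, each time quoting the characterization of minimizers of $J_p$ from Section \ref{sec:constants}.

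For $0<p<2$, Proposition \ref{thm:p<2-min} shows that the minimizers are exactly $\frac12(\delta_v+\delta_{-v})$. A curve with this tangent measure spends half its length moving in direction $v$ and half in direction $-v$, so it is contained in a line: a segment traversed back and forth, with total length $L$. Its energy is $2^{p-1}L^2$.

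For $p=2$ we use Proposition \ref{prop:p=2}: $J_2\equiv 2$, so every curve has energy $2L^2$.

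For $2<p<4$, Proposition \ref{thm:2<p<4} gives $\sigma_{n-1}$ as the unique minimizer. When $n=2$, the arclength-parametrized circle has tangent angle $2\pi s/L$, which is uniformly distributed, so the circle is a minimizer. For uniqueness among convex curves we use that a convex closed planar curve is the boundary of a convex body whose surface-area measure is $\mu_\gamma$ (up to the rotation by $\pi/2$ taking tangents to normals). The uniqueness part of Theorem \ref{teomink} then says that the curve is determined up to translation by its measure, which forces the circle.

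For $p=4$, Proposition \ref{thm:p=4} shows the minimizers are exactly the isotropic measures. When $n=2$, the circle and every regular $k$-gon with $k\ge3$ have $M=\frac12 I$, because $\sum_j e^{2i\cdot 2\pi j/k}=0$ for $k\ge3$. This already gives infinitely many minimizers.

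For $p>4$, Theorem \ref{thm:main} gives $\mu_{\rm sim}$ as the unique minimizer. In the plane this is three directions at mutual angles $2\pi/3$ with weights $1/3$ each, and the equilateral triangle of perimeter $L$ realizes it. Uniqueness among convex curves again follows from Minkowski uniqueness.

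The main point requiring care is the passage from ``convex closed curve'' to Minkowski's theorem in the plane. One must justify that a convex curve traversed once bounds a convex body with perimeter $L$ and normal measure equal to the rotated $\mu_\gamma$, and that rotating by $\pi/2$ preserves both $\mathcal{P}_0$ and $J_p$. A second point is to interpret ``multiply covered segments'' correctly: the curve can be any closed path of length $L$ whose tangent takes the values $\pm v$, each on a set of measure $L/2$.
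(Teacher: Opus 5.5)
Your proposal is correct and follows the paper's proof: you reduce via $\mathrm{E}_p(\gamma)=L^2J_p(\mu_\gamma)$ together with the realizability of every $\mu\in\mathcal{P}_0(\mathS^{n-1})$ by a closed curve of length $L$, and then read off the minimizers of $J_p$ range by range. You also supply two details the paper leaves implicit, and both are sound: the Minkowski-uniqueness argument (after rotating tangents to normals) behind the ``unique among convex curves'' claims, and the explicit check that $M=\frac12 I$ for the circle and for regular $k$-gons with $k\ge 3$.
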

\begin{proof}
The statements follow directly from the characterization of \(c(n,p)\) and the corresponding minimizers of \(J_p\). 

For \(0<p<2\), by Proposition \ref{thm:p<2-min} the minimizer of \(J_p\) is \(\mu = \frac{1}{2}(\delta_v + \delta_{-v})\), which corresponds to curves whose tangent vector takes only the two opposite values \(v\) and \(-v\). Such curves are multiply covered segments in the direction \(v\), with even multiplicity at every point.

The case \(p=2\) is trivial since \(J_2\) is constant.

For \(2<p<4\), Proposition \ref{thm:2<p<4} states that the unique minimizer of \(J_p\) is \(\sigma_{n-1}\), so that minimizing curves must have \(\mu_\gamma = \sigma_{n-1}\). For \(n=2\) this 
condition is satisfied by the circle of length $L$,
for \(n>2\) there exist curves whose tangent vector is uniformly distributed on $\mathS^{n-1}$ (see Remark \ref{remfil} below). 

For \(p=4\) the result follows directly from Proposition \ref{thm:p=4}.

For \(p>4\), Theorem \ref{thm:main} states that the unique minimizer of \(J_p\) is 
\(\mu_{\mathrm{sim}}\), so that minimizing curves must have \(\mu_\gamma = \mu_{\mathrm{sim}}\). For \(n=2\) this 
condition is satisfied by equilateral triangles of perimeter \(L\). 
\end{proof}

\begin{remark}\label{remfil}
For \(2<p<4\) and $n >2$ minimizers are non-planar curves whose tangent vectors cover uniformly the unit sphere, and can be constructed by integrating Peano-type curves with values on \(\mathS^{n-1}\) (see for instance \cite[Chapter 3]{Sagan94}). Notice that these curves can be of class \(C^{1,\alpha}\) with $\alpha\le \frac 12$, but not of class \(C^{1,1}\). 
\end{remark}

In the following theorem we describe the closed curves maximizing \eqref{prob:curve-max}.
We omit the proof which is analogous to that of Theorem \ref{thm:curve-min}.

\begin{theorem}[Maximizing curves]\label{thm:curve-max}
Let \(L > 0\) be fixed. The maximum value of \(\mathrm{E}_p\) among closed rectifiable curves of length \(L\) is \(L^2 \,C(n,p)\), where \(C(n,p)\) is given by Theorem \ref{thm:p<2-min}, Proposition \ref{prop:p=2}, Theorem \ref{thm:2<p<4}, Theorem \ref{thm:p=4}, and Theorem \ref{thm:main}. In particular,
\begin{enumerate}
\item For \(0 < p < 2\), maximizers are curves whose tangent measure is the uniform measure \(\sigma_{n-1}\). For $n=2$ the circle of length \(L\) is a maximizer, and it is the unique maximizer among convex curves. The maximum value is \(L^2 J_p(\sigma_{n-1})\).
\item For \(p=2\), every curve has energy \(2L^2\) and is a maximizer.
\item For \(p > 2\), the maximum is attained by curves that are multiply covered segments of length \(L\) (where the length is counted with multiplicity). The maximum value is \(2^{p-1} L^2\).
\end{enumerate}
\end{theorem}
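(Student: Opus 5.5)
The plan is to reduce the statement to the measure problem, using the correspondence between closed curves of length $L$ and measures in $\mathcal{P}_0(\mathS^{n-1})$ established just before Theorem \ref{thm:curve-min}. Since $\mathrm{E}_p(\gamma)=L^2 J_p(\mu_\gamma)$ and every $\mu\in\mathcal{P}_0(\mathS^{n-1})$ arises as $\mu_\gamma$ for some $\gamma\in\calA([0,L];\mathR^n)$, we have
\[
\max\{\mathrm{E}_p(\gamma)\}=L^2\max_{\mu\in\mathcal{P}_0(\mathS^{n-1})}J_p(\mu)=L^2C(n,p).
\]
Moreover, $\gamma$ is a maximizer if and only if $\mu_\gamma$ maximizes $J_p$. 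The maximum exists because $J_p$ is weak-star continuous and $\mathcal{P}_0(\mathS^{n-1})$ is compact. It then remains to translate, range by range, the characterization of the maximizing measures into geometric information on the curves.

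For $0<p<2$ I will invoke Proposition \ref{thm:p<2-min}: $J_p$ is concave on $\mathcal{P}_0(\mathS^{n-1})$, strictly by the equality case in Lemma \ref{lm:bjork}, and $\sigma_{n-1}$ is its unique maximizer. Uniqueness of the maximizer, combined with rotation invariance of $J_p$ and of $\mathcal{P}_0(\mathS^{n-1})$, forces the maximizer to be rotation invariant, hence equal to $\sigma_{n-1}$. Therefore maximizing curves are exactly those with $\mu_\gamma=\sigma_{n-1}$, and the maximum value is $L^2J_p(\sigma_{n-1})$.

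For $n=2$, the circle of length $L$ has tangent $\tau(s)$ that is the unit-speed rotation of $e_1$, so $\mu_\gamma=\sigma_1$ and the circle is a maximizer. For uniqueness among convex curves I use the following facts.
\begin{itemize}
\item A convex closed curve is traversed once with a monotone tangent angle.
\item Its tangent measure is, after rotation by $\pi/2$, the normal measure $\mu_C$ of the enclosed convex body.
\item By the uniqueness in Theorem \ref{teomink}, $\mu_C=\sigma_1$ forces $C$ to be a disc up to translation.
\end{itemize}
I expect this translation step, identifying tangent and normal measures of convex curves so that Minkowski uniqueness applies, to be the only point needing a little care.

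For $p=2$, Proposition \ref{prop:p=2} gives $J_2\equiv 2$, so every curve has energy $2L^2$ and is a maximizer. For $p>2$, Proposition \ref{max:2<p} gives $C(n,p)=2^{p-1}$, attained only by $\frac12(\delta_v+\delta_{-v})$. Curves with this tangent measure have tangent taking only the values $\pm v$, each on a set of measure $L/2$. Hence $\gamma(s)=\gamma(0)+v\int_0^s(\chi_{\{\tau=v\}}-\chi_{\{\tau=-v\}})$, which is a segment covered back and forth with total length $L$. The maximum value is $2^{p-1}L^2$, and conversely such multiply covered segments realize it. This mirrors the argument given in the proof of Theorem \ref{thm:curve-min}(1).
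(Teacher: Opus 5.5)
Your proposal is correct and follows the route the paper intends; the paper omits this proof, saying only that it is analogous to Theorem \ref{thm:curve-min}. You reduce via $\mathrm{E}_p(\gamma)=L^2J_p(\mu_\gamma)$ and the fact that every $\mu\in\mathcal{P}_0(\mathS^{n-1})$ is realized as a tangent measure, then read off the maximizers from Propositions \ref{thm:p<2-min}, \ref{prop:p=2} and \ref{max:2<p}, while also supplying details the paper leaves implicit: the strict-concavity-plus-rotation-invariance argument that identifies the unique maximizer as $\sigma_{n-1}$, and the uniqueness of the disc among convex curves via Minkowski's theorem.
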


%%%%%%%%%%%%%%%%%%%%%%%%%%%%%%%%%%%%%%%%
%%%%%%%%%%%%%%%%%%%%%%%%%%%%%%%%%%%%%%%%
%%%%%%%%%%%%%%%%%%%%%%%%%%%%%%%%%%%%%%%% 
%%%%%%%%%%%%%%%%%%%%%%%%%%%%%%%%%%%%%%%%
%%%%%%%%%%%%%%%%%%%%%%%%%%%%%%%%%%%%%%%%
%%%%%%%%%%%%%%%%%%%%%%%%%%%%%%%%%%%%%%%%
\bibliographystyle{plain}
\bibliography{bibliography.bib}
	
\end{document}